\numberwithin{equation}{section}
\newtheorem{theorem}{Theorem}[section]
\newtheorem{lemma}[theorem]{Lemma}
\newtheorem{corollary}[theorem]{Corollary}
\theoremstyle{definition}
\theoremstyle{remark}
\newtheorem{remark}[theorem]{Remark}
\newcommand{\Z}{\mathbb{Z}}
\newcommand{\R}{\mathbb{R}}
\renewcommand{\P}{\mathcal{P}}
\newcommand{\sam}[2]{\langle #1,#2 \rangle}
\title{Note on Samelson products in exceptional Lie groups}
\author{Daisuke Kishimoto}
\address{Department of Mathematics, Kyoto University, Kyoto, 606-8502, Japan}
\email{kishi@math.kyoto-u.ac.jp}
\author{Akihiro Ohsita}
\address{Faculty of Economics, Osaka University of Economics, Osaka 533-8533, Japan}
\email{ohsita@osaka-ue.ac.jp}
\author{Masahiro Takeda}
\address{Department of Mathematics, Kyoto University, Kyoto, 606-8502, Japan}
\email{m.takeda@math.kyoto-u.ac.jp}
\subjclass[2010]{55Q15, 57T10}
\keywords{exceptional Lie group, Samelson product, mod $p$ decomposition}
\begin{document}

\baselineskip.525cm

\maketitle

\begin{abstract}
  We determine (non-)triviality of Samelson products of inclusions of factors of the mod $p$ decomposition of $G_{(p)}$ for $(G,p)=(E_7,5),(E_7,7),(E_8,7)$. This completes the determination of (non-)triviality of those Samelson products in $p$-localized exceptional Lie groups when $G$ has $p$-torsion free homology.
\end{abstract}


\section{Introduction}

Let $X$ be a homotopy associative H-space with inverse. Recall that the Samelson product of maps $\alpha\colon A\to X$ and $\beta\colon B\to X$ is defined by
$$\sam{\alpha}{\beta}\colon A\wedge B\to X,\quad(a,b)\mapsto\alpha(a)\beta(b)\alpha(a)^{-1}\beta(b)^{-1}.$$
Samelson products are fundamental in the study of H-spaces, and have been studied intensely. 
Let $G$ be a compact connected Lie group. It is well known that if we localize at the prime $p$, then $G$ admits a product decomposition
$$G\simeq_{(p)}B_1\times\cdots\times B_{p-1}$$
whenever $H_*(G;\Z)$ is $p$-torsion free (see \cite{MNT}). This decomposition is called the mod $p$ decomposition of $G$. Clearly, the most important Samelson products in a $p$-localized Lie group $G_{(p)}$ are those of inclusions $B_i\to G_{(p)}$, which we call \emph{basic Samelson products}.

Basic Samelson products in $p$-localized exceptional Lie groups are studied in \cite{HK2,HKMO,HKST,KK,KO}, and in particular, their (non-)triviality is completely determined for quasi-$p$-regular exceptional Lie groups, where each $B_i$ in the mod $p$ decomposition is an H-space of rank $\le 2$. Then the only remaining cases are as in the following table.

\renewcommand{\arraystretch}{1.4}

\begin{table}[htbp]
  \label{decomp}
  \centering
  \begin{tabular}{p{2.5cm}p{2cm}l}
    \hline
    Lie group&Prime&Mod $p$ decomposition\\\hline
    $E_7$&$p=5$&$B(3,11,19,27,35)\times B(15,23)$\\
    &$p=7$&$B(3,15,27)\times B(11,23,35)\times S^{19}$\\
    $E_8$&$p=7$&$B(3,15,27,39)\times B(23,35,47,59)$\\\hline
  \end{tabular}
\end{table}

Here $B(n_1,\ldots,n_r)$ is an indecomposable space such that
$$H^*(B(n_1,\ldots,n_r);\Z/p)=\Lambda(x_1,\ldots,x_r),\quad|x_i|=n_i.$$
In this paper, we determine (non-)triviality of basic Samelson products in the above three cases to complete the determination of (non-)triviality of basic Samelson products in $p$-localized exceptional Lie groups.

\begin{theorem}
  \label{main E_7}
  For $p=5,7$, all basic Samelson products in $(E_7)_{(p)}$ are non-trivial.
\end{theorem}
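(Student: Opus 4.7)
The plan is to detect each basic Samelson product $\langle i_j, i_k \rangle \colon B_j \wedge B_k \to (E_7)_{(p)}$ via its adjoint $\widetilde{\langle i_j, i_k \rangle}\colon \Sigma(B_j \wedge B_k) \to B(E_7)_{(p)}$, working in the integral cohomology of $BE_7$. Since $E_7$ is $p$-torsion free at $p = 5, 7$, we have the polynomial algebra
\[
H^*(BE_7; \Z_{(p)}) = \Z_{(p)}[y_4, y_{12}, y_{16}, y_{20}, y_{24}, y_{28}, y_{36}]
\]
with $y_{2n+2}$ the transgression of $x_{2n+1}$, on which the Steenrod power $P^1$ acts as a derivation with orbits dictated by the mod $p$ decomposition.

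First, I would reduce non-triviality to cohomological detection by using the Hopf-invariant-type formula for adjoints of Samelson products: a nonzero value of $\widetilde{\langle i_j, i_k \rangle}^*$ on a monomial $y_a y_b$ follows from nontriviality of the suspended restrictions $\widetilde{i}_j^*(y_a)$ and $\widetilde{i}_k^*(y_b)$, together with the polynomiality of $H^*(BE_7; \Z_{(p)})$, which guarantees non-vanishing of $y_a y_b$ in the target.

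Next, I would treat the cross-factor products $\langle i_j, i_k \rangle$ with $j \ne k$ by choosing the lowest-degree generators of $B_j$ and $B_k$, anchoring non-vanishing by comparison with a classical subgroup such as $SU(8) \hookrightarrow E_7$ in which the corresponding basic Samelson products are known. Self-Samelson-products $\langle i_j, i_j \rangle$ on a factor of rank at least two are handled by the same scheme using two distinct generators $x_{n_a}, x_{n_b}$ of that factor, noting that $y_a y_b \ne y_b y_a$ as a Hopf-invariant-type obstruction only when $a \ne b$, which is exactly the configuration available.

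The main obstacle will be the self-Samelson-product $\langle i_3, i_3 \rangle \colon S^{38} \to (E_7)_{(7)}$, where $B_3 = S^{19}$ has rank one so the cross-generator approach is unavailable; moreover $P^1$ annihilates $y_{20}$, so cohomology in $BE_7$ sees this factor only through its relations with the other chains. For this case I anticipate a homotopical argument: project to the factor $B(3,15,27)$ or $B(11,23,35)$ in the mod-$7$ decomposition of $E_7$ and analyze the image via the cell structure of that factor around dimension $38$, together with a secondary Steenrod-operation computation (e.g.\ on a cup-product $y_{12} y_{16}$ or $y_4 y_{28}$ restricted to the image) to exhibit a nonzero obstruction. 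I expect the bulk of the technical work to lie in this final case.
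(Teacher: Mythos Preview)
Your detection mechanism does not work as stated. The adjoint
$\widetilde{\langle i_j,i_k\rangle}\colon \Sigma(B_j\wedge B_k)\to BE_7$
is a map out of a suspension, so every cup product in its domain vanishes; hence the pullback of any decomposable element $y_ay_b$ is automatically zero, and you cannot detect non-triviality this way. The paper's criterion (Lemma~\ref{criterion}) is an argument by contradiction: if $\langle\epsilon_j,\epsilon_k\rangle=0$ then $\bar\epsilon_j\vee\bar\epsilon_k$ extends to a map $\mu\colon\Sigma B_j\times\Sigma B_k\to BE_7$, and one finds a Steenrod operation $\theta$ and a generator $x_k$ with $\theta x_k$ decomposable containing $c\,x_ix_j$; then $\mu^*(\theta x_k)=c\,\bar\epsilon_j^*(x_i)\times\bar\epsilon_k^*(x_j)\ne 0$, while $\theta\mu^*(x_k)=0$ for degree or structural reasons. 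The comparison with $SU(8)$ plays no role.

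You have also misidentified where the difficulty lies. At $p=7$ the self-product on $S^{19}$ is \emph{not} the hard case: Corollary~\ref{P^1 E_7}(2) gives $\P^1x_{28}>4x_{20}^2$, and since $\P^1H^{28}(S^{20}\times S^{20})=0$ the criterion applies immediately. (Your claim that $\P^1$ annihilates $y_{20}$ is false in $H^*(BE_7)$; $\P^1x_{20}$ is nonzero but decomposable.) The case that actually needs extra care is $\langle\epsilon_2,\epsilon_2\rangle$ with $B_2=B(11,23,35)$: here condition (4) of the criterion does not follow from degree reasons alone, and the paper constructs special generators (Lemma~\ref{E_7 mod 7 cohomology}) with $y_{36}=(\P^1)^2y_{12}$ and $\P^1y_{36}>5y_{36}y_{12}$, so that for any extension $\mu$ one has $\mu^*(y_{36})=(\P^1)^2\mu^*(y_{12})$ and hence $\P^1\mu^*(y_{36})=0$.
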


\begin{theorem}
  \label{main E_8}
  Let $\epsilon_1\colon B(3,15,27,39)\to(E_8)_{(7)}$ and $\epsilon_2\colon B(23,35,47,59)\to(E_8)_{(7)}$ denote inclusions. Then
  $$\sam{\epsilon_1}{\epsilon_1}=0,\quad\sam{\epsilon_1}{\epsilon_2}\ne 0,\quad\sam{\epsilon_2}{\epsilon_2}\ne 0.$$
\end{theorem}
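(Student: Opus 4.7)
The plan is to exploit the standard adjoint correspondence between Samelson products in $(E_8)_{(7)}$ and Whitehead products in $B(E_8)_{(7)}$, and to detect (non-)triviality via the Steenrod algebra action on $H^*(BE_8;\Z/7)$. At $p=7$ the polynomial generators of $H^*(BE_8;\Z/7)$ sit in degrees $4,16,24,28,36,40,48,60$ and are linked by the reduced power $\mathcal{P}^1$ (which raises degree by $12$); this $\mathcal{P}^1$-structure is precisely what renders $B(3,15,27,39)$ and $B(23,35,47,59)$ indecomposable and it will govern our detection.

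For the non-triviality claims $\sam{\epsilon_1}{\epsilon_2}\ne 0$ and $\sam{\epsilon_2}{\epsilon_2}\ne 0$, I would precompose with the bottom-cell inclusions $S^3\to B(3,15,27,39)$ and $S^{23}\to B(23,35,47,59)$ to reduce to sphere Samelson products $S^{26}\to(E_8)_{(7)}$ and $S^{46}\to(E_8)_{(7)}$. Passing to adjoints in $B(E_8)_{(7)}$ turns these into Whitehead products, whose non-triviality I would detect by a secondary cohomology operation arising from the $\mathcal{P}^1$-action on the appropriate generator, equivalently by analyzing the mod $7$ cohomology ring of the mapping cone of the adjoint. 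This parallels the detection scheme used in \cite{HKMO,KO} for the quasi-$p$-regular exceptional cases.

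For the triviality $\sam{\epsilon_1}{\epsilon_1}=0$ the approach is more structural: I would try to exhibit $B(3,15,27,39)$ as an H-space retract of a $7$-locally homotopy commutative H-space equipped with a multiplicative map into $(E_8)_{(7)}$ restricting to $\epsilon_1$. Natural candidates are $7$-local classical Lie groups whose mod $7$ decomposition contains $B(3,15,27,39)$ as a factor. Homotopy commutativity of the ambient space then forces the self-Samelson product of $\epsilon_1$ to vanish on the nose.

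The principal obstacle lies in this triviality claim. Simply verifying that the restriction to the bottom cell $S^6\subset B(3,15,27,39)\wedge B(3,15,27,39)$ is null does not suffice, since higher-dimensional cells can contribute obstructions valued in the $7$-primary parts of $\pi_*((E_8)_{(7)})$. The technical heart of the proof will therefore be either to produce the structural model suggested above, or, failing that, to mount an inductive argument along the cell filtration of the smash product and verify that each successive obstruction either lies in a trivial group or vanishes by naturality.
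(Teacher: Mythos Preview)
Your detection scheme for the two non-triviality claims has a concrete gap. Restricting to the bottom cells $S^3\hookrightarrow B_1$ and $S^{23}\hookrightarrow B_2$ forces you to detect the Whitehead product via a decomposable $x_4x_{24}$ (for $\sam{\epsilon_1}{\epsilon_2}$) or $x_{24}^2$ (for $\sam{\epsilon_2}{\epsilon_2}$) in $H^*(BE_8;\Z/7)$. But there are \emph{indecomposable} polynomial generators $x_{28}$ and $x_{48}$ in exactly those degrees, and the $\P^1$-chain that makes the factor spaces indecomposable gives $\P^1x_{16}\equiv x_{28}$ and $\P^1x_{36}\equiv x_{48}$ modulo decomposables. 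So no primary $\P^1$ relation of the shape ``$\P^1(\text{generator})$ is decomposable and contains $x_4x_{24}$ (resp.\ $x_{24}^2$)'' exists, and your criterion cannot fire at the bottom-cell level. The appeal to an unspecified secondary operation does not repair this without substantial further work. The paper avoids the problem by \emph{not} passing all the way down to spheres: for $\sam{\epsilon_1}{\epsilon_2}$ it restricts to $B_1^{(27)}\times S^{23}$ and uses that $\P^1x_{40}$ \emph{is} decomposable (degree $52$ carries no generator) with $\P^1x_{40}>-3x_{28}x_{24}$; for $\sam{\epsilon_2}{\epsilon_2}$ it restricts to $B_2^{(47)}\times B_2^{(23)}$ and uses a rechosen generator $y_{60}$ with $\P^1y_{60}>2y_{48}y_{24}$, after first verifying the crucial hypothesis $\P^1\mu^*(y_{60})=0$ by writing $\mu^*(y_{60})=(\P^1)^3\mu^*(y_{24})$.

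For $\sam{\epsilon_1}{\epsilon_1}=0$ your two suggested routes are both different from what the paper does. The ``homotopy commutative ambient H-space'' idea is attractive but you give no candidate, and producing a multiplicative map into $(E_8)_{(7)}$ restricting to $\epsilon_1$ is exactly the hard part. Your fallback cell-by-cell obstruction argument is closer in spirit, but the paper organizes it more efficiently: rather than filtering $B_1\wedge B_1$ directly, it invokes Theriault's suspension splitting to produce a subcomplex $A=S^3\cup e^{15}\cup e^{27}\cup e^{39}\subset B_1$ together with a retraction $r\colon\Sigma B_1\to\Sigma A$ through which the adjoint $j\colon\Sigma B_1\to(BE_8)_{(7)}$ factors. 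Then $A\wedge A$ has cells only in dimensions $12i+6$ for $0\le i\le 6$, and these homotopy groups of $(E_8)_{(7)}$ all vanish (computed in Mimura--Nishida--Toda), so $\sam{\epsilon_1\vert_A}{\epsilon_1\vert_A}=0$; the retraction then upgrades this to $[j,j]=0$ and hence $\sam{\epsilon_1}{\epsilon_1}=0$. The point is that the obstruction groups are \emph{globally} zero once one passes to $A$, so no inductive bookkeeping is needed.
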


\textit{Acknowledgement:} The first author is supported by JSPS KAKENHI No. 17K05248.


\section{Mod 7 cohomology of $BE_8$}

Throughout this paper, cohomology is taken over the field $\Z/p$, where $p$ is an odd prime.

Let $p$ be a prime $>5$. Then the mod $p$ cohomology of the classifying space $BE_8$ is given by
$$H^*(BE_8)=\Z/p[x_4,x_{16},x_{24},x_{28},x_{36},x_{40},x_{48},x_{60}],\quad|x_i|=i$$
and the mod $p$ cohomology of the classifying space $BSpin(2m)$ is given by
$$H^*(BSpin(2m))=\Z/p[p_1,\ldots,p_{m-1},e_m]$$
where $p_i$ and $e_m$ denote the Pontrjagin class and the Euler class, respectively. As in \cite{A}, there is a natural map $i\colon Spin(16)\to E_8$. In \cite{HKMO,HKO}, an explicit choice of generators $x_i$ of the mod $p$ cohomology of $BE_8$ are made through this natural map. In particular, one has:

\begin{lemma}
  \label{generator E_8}
  Generators $x_i$ of the mod $p$ cohomology of $BE_8$ can be chosen such that
  \begin{align*}
    i^*(x_4)&=p_1\\
    i^*(x_{16})&= 12p_4-\frac{18}{5}p_3p_1+p_2^2+\frac{1}{10}p_2p_1^2+168e_8\\
    i^*(x_{24})&\equiv 60p_6-5p_5p_1-5p_4p_2+3p_3^2+\frac{5}{36}p_2^3+110p_2e_8&&\mod(p_1^2)\\
    i^*(x_{28})&\equiv 480p_7+40p_5p_2-12p_4p_3-p_3p_2^2+312p_3e_8&&\mod(p_1)\\
    i^*(x_{36})&\equiv 480p_7p_2+72p_6p_3-30p_5p_4&&\mod(p_1)+I^3\\
    i^*(x_{40})&\equiv 480p_7p_3+50p_5^2&&\mod(p_1)+I^3\\
    i^*(x_{48})&\equiv -200p_7p_5-60p_7p_3p_2+3p_6p_3^2&&\mod (p_1)+I^4\\
    i^*(x_{60})&\equiv 144p_7p_5p_3-5p_5^3+\frac{3}{2}p_5^2p_3p_2-\frac{89}{1440}p_5p_4^2p_2-\frac{229}{1600}p_5p_4p_3^2&&\mod (p_1)+I^5
  \end{align*}
  where $I=\widetilde{H}^*(BSpin(16))$.
\end{lemma}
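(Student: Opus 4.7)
My plan is to exploit that $i\colon Spin(16)\to E_8$ maps a maximal torus of $Spin(16)$ to a maximal torus of $E_8$ (both groups have rank $8$), so that $i^*\colon H^*(BE_8)\to H^*(BSpin(16))$ factors as the restriction from $W(E_8)$-invariants to $W(Spin(16))$-invariants inside $H^*(BT)$. In particular $i^*$ is injective on the range of primes being considered, and to prove the lemma I need only choose, in each of the degrees $4,16,24,28,36,40,48,60$, a class $x_i\in H^*(BE_8)$ which is indecomposable modulo products of lower-degree generators, and then verify that its image is the displayed polynomial in $p_1,\ldots,p_7,e_8$.

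The formulas for $i^*(x_4),\ldots,i^*(x_{40})$ are constructed in \cite{HKMO,HKO}, and I would simply quote them. What is new here is the explicit form of $i^*(x_{48})$ and $i^*(x_{60})$, where moreover only the class modulo $(p_1)+I^4$ and $(p_1)+I^5$ is required. To pin these down I would use the branching of the adjoint representation
\[\mathfrak{e}_8|_{\mathfrak{so}(16)}=\mathfrak{so}(16)\oplus\Delta_+^{128},\]
where $\Delta_+^{128}$ is the positive half-spin module. The universal characteristic classes of the adjoint bundle of $E_8$ then pull back under $i^*$ to the sum of the characteristic classes of $\mathfrak{so}(16)$, which are polynomials in $p_1,\ldots,p_7$, plus those of $\Delta_+^{128}$, whose Chern roots are the half-sums $\tfrac{1}{2}(\pm t_1\pm\cdots\pm t_8)$ with an even number of minus signs and therefore introduce the Euler class $e_8=t_1\cdots t_8$. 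Choosing indecomposable combinations in degrees $48$ and $60$ of these characteristic-class expressions then yields the desired $x_{48}$ and $x_{60}$.

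The main obstacle will be controlling the precise rational coefficients, especially fractions such as $-89/1440$ and $-229/1600$ in $i^*(x_{60})$. Working modulo $(p_1)+I^k$ kills many contributions, but what survives must still be extracted from a substantive calculation of the Newton power sums of the $128$ weights of $\Delta_+^{128}$ after reduction to $Spin(16)$-invariants. The bookkeeping is made trickier by the fact that an indecomposable lifting in $H^*(BE_8)$ is defined only modulo products of lower $x_j$, and one has to use this ambiguity to normalise to the displayed form. In practice I would expand the relevant power sums, discard all terms in $(p_1)+I^k$, and then subtract decomposables in $x_4,x_{16},\ldots$ so that $x_{48}$ and $x_{60}$ remain indecomposable with precisely the claimed images.
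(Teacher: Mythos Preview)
The paper does not prove this lemma at all: it simply records the formulas and cites \cite{HKMO,HKO} for their derivation.  There is therefore nothing in the paper to compare your argument against line by line.

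That said, your outline is broadly in the spirit of what those references do: one identifies $H^*(BE_8)$ with the $W(E_8)$-invariants in $H^*(BT)$, and then $i^*$ is just the inclusion into the $W(Spin(16))$-invariants $\Z/p[p_1,\ldots,p_7,e_8]$.  The cited references construct the $x_i$ by direct invariant-theoretic computation, analysing how the extra reflection in $W(E_8)$ not in $W(Spin(16))$ acts and solving the resulting linear constraints degree by degree --- exactly the method the present paper carries out from scratch for $E_7$ in Section~3.  Your proposed detour through characteristic classes of the adjoint representation and its branching $\mathfrak{e}_8|_{\mathfrak{so}(16)}=\mathfrak{so}(16)\oplus\Delta_+$ is a legitimate alternative source of $W(E_8)$-invariants, and in principle would produce the same generators after the normalisation you describe; but it is not the route taken in \cite{HKMO,HKO}, and the power-sum expansion of the $128$ half-spin weights in degree $60$ is arguably more laborious than the linear-algebra approach.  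One small correction: your assertion that $x_{48}$ and $x_{60}$ are ``new here'' is not supported by the paper's phrasing, which attributes the entire list to the cited references.
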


If a polynomial $P$ includes a monomial $M$, then we write $P>M$.

\begin{lemma}
  \label{P^1 E_8}
  In $H^*(BE_8)$ for $p=7$, $\P^1x_{40}$ is decomposable such that
  $$\P^1x_{40}>-3x_{28}x_{24}.$$
\end{lemma}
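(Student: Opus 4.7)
The plan rests on two structural observations. First, $\P^1 x_{40}$ has degree $40+2(7-1)=52$, and since the generators of $H^*(BE_8)$ at $p=7$ lie in degrees $4,16,24,28,36,40,48,60$, none equal to $52$, every element in degree $52$ is automatically decomposable. Second, among monomials of total degree $52$ in these generators, the only ones that avoid $x_4$ are $x_{16}x_{36}$ and $x_{24}x_{28}$, so
$$\P^1 x_{40}\equiv a\,x_{16}x_{36}+b\,x_{24}x_{28}\pmod{(x_4)}$$
for some $a,b\in\Z/7$, and the task reduces to showing $b\equiv-3\pmod 7$.

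I would then apply $i^*$ and work in the quotient by $(p_1)+I^3$, with $I$ as in Lemma \ref{generator E_8}. By that lemma $i^*(x_{16})\in I$ while $i^*(x_{36})\in I^2$, so $i^*(x_{16}x_{36})\in I^3$ dies; on the other hand $i^*(x_{24})$ and $i^*(x_{28})$ have leading $I\setminus I^2$ parts $60\,p_6$ and $480\,p_7$, giving $i^*(x_{24}x_{28})\equiv 2\,p_6p_7\pmod{(p_1)+I^3,\,7}$. So it suffices to show $i^*(\P^1 x_{40})\equiv p_6p_7$ in the same quotient, which pairs with $2b\,p_6p_7$ to force $b\equiv 4\equiv-3\pmod 7$. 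To that end, expand $\P^1 i^*(x_{40})=\P^1(480p_7p_3+50p_5^2)$ by Cartan. The top degree of an indecomposable generator of $H^*(BSpin(16))$ is $28$, so $\P^1 p_5$ (degree $32$) and $\P^1 p_7$ (degree $40$) both lie in $I^2$; hence $2p_5\P^1 p_5$ and $\P^1 p_7\cdot p_3$ die in $I^3$, leaving only $480\,p_7\cdot\P^1 p_3$. Since $\P^1 p_3$ has degree $24$ with $p_6$ the only indecomposable generator in that degree, $\P^1 p_3\equiv c\,p_6\pmod{(p_1)+I^2}$ for some $c\in\Z/7$.

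The remaining point is to compute $c$. I would use the splitting principle: regarding Pontrjagin classes as elementary symmetric functions in roots $y_j$ of degree $4$, the relation $\P^1 y=2y^4$ (valid at $p=7$ by writing $y=v^2$ and applying the Cartan formula) gives $\P^1 p_3=2(P_4p_2-P_5p_1+P_6)$ with $P_r=\sum_j y_j^r$. Modulo $(p_1)+I^2$ the first two summands vanish, and Newton's identities collapse $P_6$ to $-6p_6$, so $c=-12\equiv 2\pmod 7$. Substituting back yields $i^*(\P^1 x_{40})\equiv 480\cdot 2\,p_6p_7\equiv p_6p_7$, as required. The one genuinely delicate step is this symmetric-function reduction of $\P^1 p_3$; everything else is forced by comparing degrees on either side of $i^*$.
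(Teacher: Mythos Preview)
Your argument is correct and follows essentially the same route as the paper: both isolate the monomial $p_7p_6$ in $\P^1 i^*(x_{40})$ and read off the coefficient of $x_{28}x_{24}$ from Lemma~\ref{generator E_8}. The only difference is cosmetic: the paper invokes the mod~$p$ Wu formula \eqref{mod p Wu formula} from \cite{S} to get $\P^1(p_7p_3)>-5p_7p_6$ (equivalently your $2p_7p_6$), whereas you derive $\P^1 p_3\equiv 2p_6$ directly via the splitting principle and Newton's identity; your version is a bit more explicit about why the $x_{16}x_{36}$ and $x_4$--multiple terms die modulo $(p_1)+I^3$, but the underlying computation is the same.
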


\begin{proof}
  By a degree reason, $\P^1x_{40}$ is decomposable. Recall from \cite{S} that there is the mod $p$ Wu formula in $H^*(BSpin(2m))$
  \begin{multline}
    \label{mod p Wu formula}
    \P^1p_n=\sum_{i_1+2i_2+\cdots+mi_m=n+\frac{p-1}{2}}(-1)^{i_1+\cdots+i_m+\frac{p+1}{2}}\frac{(i_1+\cdots+i_m-1)!}{i_1!\cdots i_m!}\\
    \times\left(2n-1-\frac{\sum_{j=1}^{n-1}(2n+p-1-2j)i_j}{i_1+\cdots+i_m-1}\right)p_1^{i_1}\cdots p_m^{i_m}
  \end{multline}
  where $p_m=e_m^2$. Then $\P^1x_{16}$ is decomposable. Then $\P^1(p_7p_3)>-5p_7p_6$, and so by Lemma \ref{generator E_8}, $\P^1i^*(x_{40})>-2400p_7p_6$. Thus by Lemma \ref{generator E_8}, $\P^1x_{40}>-3x_{28}x_{24}$.
\end{proof}

We make an alternative choice of generators of the mod 7 cohomology of $BE_8$.

\begin{lemma}
  \label{E_8 mod 7 cohomology}
  The mod 7 cohomology of $BE_8$ is given by
  $$H^*(BE_8)=\Z/7[y_4,y_{16},y_{24},y_{28},y_{36},y_{40},y_{48},y_{60}],\quad|y_i|=i$$
  such that
  $$\P^1y_{24}=y_{36},\quad\P^1y_{36}=y_{48},\quad\P^1y_{48}=y_{60},\quad\P^1y_{60}>2y_{48}y_{24}.$$
\end{lemma}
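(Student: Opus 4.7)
Plan: We follow the strategy of Lemma~\ref{P^1 E_8}. The new generators $y_i$ are built by iterated application of $\P^1$ to a normalised version of $x_{24}$, and the decomposable part of $\P^1 y_{60}$ is identified by pulling back to $H^*(BSpin(16))$.

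The first step is to show that $\P^1 x_{24}$, $\P^1 x_{36}$, and $\P^1 x_{48}$ are, modulo decomposables in $H^*(BE_8)$, non-zero scalar multiples of $x_{36}$, $x_{48}$, and $x_{60}$, respectively. Expressing each $i^*(x_k)$ in Pontrjagin and Euler classes via Lemma~\ref{generator E_8}, we compute $\P^1 i^*(x_k)$ using the Wu formula~\eqref{mod p Wu formula} and the Cartan formula. A single distinctive Pontrjagin monomial in $i^*(x_{k+12})$ — such as $p_7p_2$, $p_7p_5$, or $p_7p_5p_3$ for $k=24,36,48$ — can be singled out that is not reached by the $i^*$-image of any competing decomposable summand of $H^*(BE_8)$ in the relevant degree, and matching its coefficient in $\P^1 i^*(x_k)$ gives the desired non-zero scalar mod~$7$.

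Consequently, for a scalar $\lambda\in\Z/7$ to be fixed in the next step, set
\[
  y_4=x_4,\ y_{16}=x_{16},\ y_{24}=\lambda x_{24},\ y_{28}=x_{28},\ y_{40}=x_{40},
\]
together with $y_{36}=\P^1 y_{24}$, $y_{48}=\P^1 y_{36}$, and $y_{60}=\P^1 y_{48}$. These eight elements form a polynomial generating set for $H^*(BE_8;\Z/7)$, and the first three displayed relations hold by construction.

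For the final relation $\P^1 y_{60}>2y_{48}y_{24}$, invoke the Adem relations at $p=7$: $(\P^1)^2=2\P^2$, $\P^1\P^2=3\P^3$, and $\P^1\P^3=4\P^4$; hence $(\P^1)^4=3\P^4$ and $\P^1 y_{60}=\lambda(\P^1)^4 x_{24}$. Apply $\P^1$ four times to $i^*(x_{24})$ via the Wu and Cartan formulas, single out a Pontrjagin monomial in $i^*(x_{48}x_{24})$ not produced by any other degree-$72$ decomposable summand of $H^*(BE_8)$, and extract its coefficient. If this coefficient is non-zero mod~$7$, the scalar $\lambda$ is uniquely fixed so that the coefficient of $y_{48}y_{24}$ in $\P^1 y_{60}$ equals $2$. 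The principal obstacle is this last computation: the Wu formula produces many terms at each stage, and four iterations multiply the proliferation, so isolating the target coefficient — while straightforward in principle — is computationally substantial. The trick used in Lemma~\ref{P^1 E_8}, namely latching onto a single well-chosen Pontrjagin monomial and discarding the rest, remains the natural guide, though substantially more binomial coefficients modulo~$7$ must be marshalled than in the single-$\P^1$ application of Lemma~\ref{P^1 E_8}.
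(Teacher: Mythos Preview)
Your plan is broadly correct and follows the same skeleton as the paper, but the paper streamlines two of your steps considerably, and there is one imprecision worth fixing.

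First, the paper does not verify that $\P^1 x_{24},\P^1 x_{36},\P^1 x_{48}$ are indecomposable by direct Wu-formula computation. Instead it invokes \cite{MNT}: since $H^*(B(23,35,47,59);\Z/7)=\Lambda(z_{23},\P^1 z_{23},(\P^1)^2 z_{23},(\P^1)^3 z_{23})$, the iterated $\P^1$'s of $y_{24}$ are automatically generators. This removes your entire first step. The paper also does not introduce a free scalar $\lambda$; it fixes a specific $y_{24}$ (the one with no $p_4p_1^2$ term in $i^*(y_{24})$) and computes that the coefficient already comes out to $2$.

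Second, the paper controls the final computation by working modulo $I^4+(e_8)$ in $H^*(BSpin(16))$ and tracking the single monomial $p_7p_6p_5$: it records $i^*(y_{48})\equiv p_7p_5+\cdots$ and $\P^1 i^*(y_{60})\equiv p_7p_6p_5+\cdots$, then uses $i^*(y_{24})\equiv 60p_6+\cdots$ to read off the coefficient. The Adem relations you invoke are correct but do not help, since the Wu formula \eqref{mod p Wu formula} is only stated for $\P^1$, so one iterates $\P^1$ regardless.

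Finally, a genuine imprecision: you propose to extract the coefficient by singling out a Pontrjagin monomial in $i^*(x_{48}x_{24})$. But the relation you need is in the $y$-basis, and $y_{48}=\lambda(\P^1)^2x_{24}$ differs from a scalar multiple of $x_{48}$ by decomposables, so $i^*(y_{48}y_{24})$ and $i^*(x_{48}x_{24})$ have different leading coefficients (e.g.\ $p_7p_5$ versus $-200p_7p_5$). You must compute $i^*(y_{48})$ itself, as the paper does, before matching the coefficient of $p_7p_6p_5$.
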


\begin{proof}
  By Lemma \ref{generator E_8}, we can choose $y_{24}$ such that $i^*(y_{24})$ does not include $p_4p_1^2$. Then we choose $y_{24}$ such that
  $$i^*(y_{24})\equiv 60p_6-5p_5p_1-5p_4p_2+3p_3^2+\frac{5}{36}p_2^3+110p_2e_8\mod I^4+(p_1^2)$$
  where $I=\widetilde{H}^*(BSpin(16))$. As in \cite{MNT}, the mod 7 cohomology of $B(23,35,47,59)$ is given by
  $$H^*(B(23,35,47,59))=\Lambda(z_{23},\P^1z_{23},(\P^1)^2z_{23},(\P^1)^3z_{23}),\quad|z_{23}|=23.$$
  Then we can choose $y_{36}=\P^1y_{24},\,y_{48}=\P^1y_{36},\,y_{60}=\P^1y_{48}$. In particular, we can see from \eqref{mod p Wu formula} that
  \begin{align*}
    i^*(y_{48})&\equiv p_7p_5+p_7p_4p_1+p_7p_3p_2+2p_6^2+2p_6p_5p_1\\
    &\quad+2p_6p_4p_2+4p_6p_3^2+2p_5^2p_2-2p_5p_4p_3-p_4^3\\
    \P^1i^*(y_{60})&\equiv -2p_7^2p_4+p_7p_6p_5+3p_6^3&\mod I^4+(e_8).
  \end{align*}
  Thus by looking at the term $p_7p_6p_5$ in $\P^1i^*(y_{60})$, we obtain $\P^1y_{60}>2y_{48}y_{24}$.
\end{proof}


\section{Mod 5 and 7 cohomology of $BE_7$}

In \cite{HK2,HKO,HKMO,KK}, informations of the mod $p$ cohomology of $BE_7$ for $p>5$ is obtained from $BE_8$ through the inclusion $E_7\to E_8$. However, we need the mod 5 cohomology of $BE_7$, and so we calculate the mod $p$ cohomology of $BE_7$ for $p>3$ from full scratch. Let $e_1,\ldots,e_8$ denote the dual of the standard basis of $\R^8$. Then the Dynkin diagram of $E_7$ is given by
\begin{center}
  \begin{tikzpicture}[x=0.7cm, y=0.7cm, thick]
    \draw (0,0) circle [radius=0.1];
    \node at (0,0.5) {$\alpha_1$};
    \draw (2,0) circle [radius=0.1];
    \node at (2,0.5) {$\alpha_3$};
    \draw (4,0) circle [radius=0.1];
    \node at (4,0.5) {$\alpha_4$};
    \draw (6,0) circle [radius=0.1];
    \node at (6,0.5) {$\alpha_5$};
    \draw (8,0) circle [radius=0.1];
    \node at (8,0.5) {$\alpha_6$};
    \draw (10,0) circle [radius=0.1];
    \node at (10,0.5) {$\alpha_7$};
    \draw (4,-2) circle [radius=0.1];
    \node at (3.4,-2) {$\alpha_2$};
    \draw (0.1,0)--(1.9,0);
    \draw (2.1,0)--(3.9,0);
    \draw (4.1,0)--(5.9,0);
    \draw (6.1,0)--(7.9,0);
    \draw (8.1,0)--(9.9,0);
    \draw (4,-0.1)--(4,-1.9);
  \end{tikzpicture}
\end{center}
such that
$$\alpha_1=\frac{1}{2}(e_1+e_8)-\frac{1}{2}(e_2+e_3+e_4+e_5+e_6+e_7),\quad\alpha_2=e_1+e_2,\quad\alpha_i=e_{i-1}-e_{i-2}\quad(3\le i\le 7)$$
where the Lie algebra of a maximal torus $T$ of $E_7$ is identified with $\{(x_1,\ldots,x_8)\in\R^8\mid x_7+x_8=0\}$.

Let $p$ be a prime $>3$. Then there is an isomorphism
$$H^*(BE_7)\cong H^*(BT)^W$$
where $W$ denotes the Weyl group of $E_7$. Hence we calculate the invariant ring of $W$ on the right hand side. Since the center of $E_7$ is isomorphic with $\Z/2$, $\alpha_1,\ldots,\alpha_7$ generates $H^*(BT)$. Set $t_1=-e_1$, $t_8=-e_8$ and $t_i=e_i$ for $2\le i\le 7$. Then one gets
$$H^*(BT)=\Z/p[t_1,\ldots,t_8]/(t_7-t_8).$$
Let $\varphi_i$ denote the reflection corresponding to $\alpha_i$ for $1\le i\le 7$. Then the Weyl group $W$ is generated by $\varphi_1,\ldots,\varphi_7$. Let $G$ be a subgroup of $W$ generated by $\varphi_i$ for $2\le i\le 7$. Then
$$H^*(BT)^G=\Z/p[t_7,t_8,p_1,\ldots,p_5,e_6]/(t_7-t_8)$$
where $p_i$ is the $i$-th symmetric polynomial in $t_1^2,\ldots,t_6^2$ and $e_6=t_1\cdots t_6$, and so
$$H^*(BT)^W=(\Z/p[t_7,t_8,p_1,\ldots,p_5,e_6]/(t_7-t_8))^{\varphi_1}.$$
Thus we investigate the invariant ring on the right hand side.

We define elements of $H^*(BT)^G$ by
\begin{align*}
  \bar{x}_4&=p_1\\
  \bar{x}_{12}&=-6p_3+p_2p_1+60e_6\\
  \bar{x}_{16}&=12p_4+p_2^2-\frac{1}{2}p_2p_1^2-36p_1e_6\\
  \bar{x}_{20}&=p_5-p_2e_6\\
  \bar{x}_{24}&=p_4p_2-\frac{1}{36}p_2^3-12p_3e_6+3p_2p_1e_6+48e_6^2\\
  \bar{x}_{28}&=p_5p_2-3p_4e_6-\frac{1}{4}p_2^2e_6\\
  \bar{x}_{36}&=p_5p_2^2-6p_4p_2e_6+36p_3e_6^2-\frac{1}{2}p_2^3e_6-72e_6^3
\end{align*}
which will turn out to represent generators of $H^*(BE_7)$. Let $c_i$ denote the $i$-the symmetric polynomial in $t_1,\ldots,t_8$ for $1\le i\le 8$, and let $q_i$ denote the $i$-the symmetric polynomial in $t_1^2,\ldots,t_8^2$ for $1\le i\le 8$. Then
$$q_i\equiv p_i,\quad c_6\equiv e_6\mod(t_7).$$
It is easy to see that
$$\varphi_1(t_i)=t_i-\frac{1}{4}(t_1+\cdots+t_8)$$
for $1\le i\le 8$. Then the ideal $(c_1,t_7)^2$ of $H^*(BT)$ is stable under the action of $\varphi_1$. Hence we calculate elements of $H^*(BT)^G$ which are invariant by $\varphi_1$ modulo $(c_1,t_7)^2$. Since
$$\sum_{i=0}^8\varphi_1(c_i)=\prod_{i=1}^8(1+\varphi_1(t_i))=\prod_{i=1}^8\left(1+t_i-\frac{1}{4}c_1\right)=\sum_{i=0}^8\left(1-\frac{1}{4}c_1\right)^{8-i}c_i$$
where $c_0=1$, one has
\begin{equation}
  \label{phi c}
  \begin{aligned}
    \varphi_1(c_1)&=-c_1,\qquad\varphi_1(c_2)=c_2,\\
    \varphi_1(c_i)&\equiv c_i-\frac{8-i+1}{4}c_{i-1}c_1\mod(c_1^2)\quad(3\le i\le 8).
  \end{aligned}
\end{equation}
Then, in particular, $\varphi_1(q_1)=q_1$, and so one gets:

\begin{lemma}
  \label{4}
  If $x\in H^4(BT)^G$ satisfies $\varphi_1(x)\equiv x\mod(c_1,t_7)^2$, then for some $a\in\Z/p$,
  $$x\equiv a\bar{x}_4\mod(t_7).$$
\end{lemma}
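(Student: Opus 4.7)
My plan is to read the conclusion directly off the explicit presentation
$$H^*(BT)^G = \Z/p[t_7, t_8, p_1, \ldots, p_5, e_6]/(t_7-t_8)$$
established above. Since $|t_7|=|t_8|=2$ and $|p_1|=4$, while the remaining generators $p_2, p_3, p_4, p_5, e_6$ all have degree $\ge 8$, the only monomials of total degree $4$ in $t_7, t_8, p_1, \ldots, p_5, e_6$ are $p_1, t_7^2, t_7 t_8, t_8^2$. Imposing the relation $t_7 = t_8$ leaves $\{p_1, t_7^2\}$ as a $\Z/p$-basis of $H^4(BT)^G$. Hence any $x \in H^4(BT)^G$ admits a unique expression $x = ap_1 + bt_7^2$ with $a, b \in \Z/p$, and reducing modulo $(t_7)$ kills the second term; since $\bar{x}_4 = p_1$ this gives $x \equiv a\bar{x}_4 \pmod{t_7}$, as required.

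I do not expect any real obstacle. In fact, in this degree the hypothesis $\varphi_1(x) \equiv x \pmod{(c_1,t_7)^2}$ turns out to be redundant, because $H^4(BT)^G/(t_7)$ is already one-dimensional over $\Z/p$ and spanned by $\bar{x}_4$. The hypothesis is phrased this way to match the form of the analogous lemmas in higher degrees, where the invariant ring $H^{\bullet}(BT)^G$ has many more monomials modulo $(t_7)$ and the partial $\varphi_1$-invariance is genuinely needed to cut the answer down to a specific combination of the $\bar{x}_i$. The only step that could conceivably fail here is the monomial count, but it is immediate from the stated degrees of the generators of $H^*(BT)^G$.
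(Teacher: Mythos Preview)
Your proposal is correct and matches the paper's approach. The paper's proof is the single remark ``$\varphi_1(q_1)=q_1$, and so one gets'' preceding the lemma; since $q_1\equiv p_1\pmod{t_7}$, this amounts to the same degree count you carry out, and your observation that the $\varphi_1$-invariance hypothesis is redundant in degree $4$ is accurate.
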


Since
\begin{equation}
  \label{q-c}
  q_i=c_i^2-2c_{i-1}c_{i+1}+2c_{i-2}c_{i+2}+\cdots+(-1)^i2c_{2i},
\end{equation}
it follows from \eqref{phi c} that
$$\varphi_1(q_i)\equiv q_i+d_ic_1\mod(c_1^2)$$
for $2\le i\le 7$ such that
\begin{equation}
  \label{phi}
  \begin{alignedat}{2}
    d_2&=\frac{3}{2}c_3\quad&d_3&=-\frac{1}{2}(5c_5+c_3c_2)\\
    d_4&=\frac{1}{2}(7c_7+3c_5c_2-c_4c_3)&d_5&=-\frac{1}{2}(5c_7c_2-3c_6c_3+c_5c_4)\\
    d_6&=-\frac{1}{2}(5c_8c_3-3c_7c_4+c_6c_5)\quad\qquad&d_7&=\frac{1}{2}(3c_8c_5-c_7c_6).
  \end{alignedat}
\end{equation}
One also has
$$\varphi_1(e_6)\equiv e_6-\frac{1}{4}c_5c_1\mod(c_1,t_7)^2.$$

\begin{lemma}
  \label{12,16}
  If $y_i\in H^i(BT)^G$ satisfies $\varphi_1(y_i)\equiv x\mod(c_1,t_7)^2$ for $i=12,16$, then for some $a,b,a',b',c'\in\Z/p$,
  \begin{align*}
    y_{12}&\equiv a\bar{x}_{12}+b\bar{x}_4^3\\
    y_{16}&\equiv a'\bar{x}_{16}+b'\bar{x}_{12}\bar{x}_4+c'\bar{x}_4^4\mod(t_7).
  \end{align*}
\end{lemma}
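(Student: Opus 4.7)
The plan is to reduce everything modulo $(t_7)$ at the outset. In the quotient $H^*(BT)^G/(t_7)$ we have $q_i\equiv p_i$, $c_6\equiv e_6$, and $c_7=c_8=0$, identifying it with the polynomial ring $\Z/p[p_1,\ldots,p_5,e_6]$; all homogeneous degrees here are multiples of $4$. Hence the degree $12$ part has basis $\{p_3,p_2p_1,p_1^3,e_6\}$, the degree $16$ part has basis $\{p_4,p_3p_1,p_2^2,p_2p_1^2,p_1^4,e_6p_1\}$, and every element of $H^*(BT)^G$ in degrees $10$ and $14$ lies in the ideal $(t_7)$.

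For $y_{12}$, write $y_{12}\equiv\alpha p_3+\beta p_2p_1+\gamma p_1^3+\delta e_6\pmod{(t_7)}$ and apply $\varphi_1-1$ term by term using \eqref{phi}, $\varphi_1(e_6)\equiv e_6-\tfrac14 c_5c_1$, and the Leibniz rule; the outcome is $c_1R$ modulo $(c_1,t_7)^2$ with
$$R=-\tfrac{\alpha}{2}(5c_5+c_3c_2)+\tfrac{3\beta}{2}c_3p_1-\tfrac{\delta}{4}c_5.$$
The $t_7$-correction of $y_{12}$ alters $R$ only by an element of $(t_7)$, because $\varphi_1(t_7)=t_7-\tfrac14 c_1$ and the correction sits in degree $10$. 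Invoking the regularity of the sequence $c_1,t_7$ in $\Z/p[t_1,\ldots,t_7]$, the invariance condition collapses to $R\equiv 0\pmod{(c_1,t_7)}$, i.e., $R$ vanishes in $\Z/p[t_1,\ldots,t_6]/(c_1)\cong\Z/p[c_2,\ldots,c_6]$. Applying Newton's identity $p_1\equiv-2c_2\pmod{c_1}$, $R$ reduces to a $\Z/p$-linear combination of the independent monomials $c_5$ and $c_3c_2$, forcing $\delta=-10\alpha$ and $\beta=-\alpha/6$, with $\gamma$ free. Since $\bar x_{12}$ corresponds to $(\alpha,\beta,\gamma,\delta)=(-6,1,0,60)$ and $\bar x_4^3=p_1^3$ to $(0,0,1,0)$, the solution space coincides with the span of $\bar x_{12}$ and $\bar x_4^3$.

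The degree $16$ case is identical in structure. Since $c_7\equiv 0\pmod{(t_7)}$, one has $d_4\equiv\tfrac12(3c_5c_2-c_4c_3)$. Writing $y_{16}$ in the six-element basis, the analogous $R$ reduces modulo $c_1$, using $p_1\equiv-2c_2$ and $p_2\equiv c_2^2+2c_4$, to a combination of the three independent monomials $c_5c_2$, $c_4c_3$, $c_3c_2^2$ in $\Z/p[c_2,\ldots,c_6]$. Their vanishing imposes three linear conditions on the six coefficients, and direct substitution verifies that $\bar x_{16}$, $\bar x_{12}\bar x_4$, and $\bar x_4^4$ form a basis of the resulting $3$-dimensional solution space.

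The main obstacle is the bookkeeping: tracking precisely which $c_ic_j$ and $c_ic_jc_k$ monomials appear in $R$ after the Newton reductions, and verifying their linear independence in $\Z/p[c_2,\ldots,c_6]$ so that the linear system produces exactly the codimension that matches the span of the listed $\bar x$'s.
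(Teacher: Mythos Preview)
Your proof is correct and follows essentially the same route as the paper: reduce modulo $(t_7)$, express the candidate invariant in the $p_i,e_6$ basis, apply the formulas \eqref{phi} and $\varphi_1(e_6)\equiv e_6-\tfrac14 c_5c_1$ to compute $(\varphi_1-1)$ modulo $(c_1,t_7)^2$, and then solve the resulting linear system in the monomials $c_5,c_3c_2$ (degree~12) and $c_5c_2,c_4c_3,c_3c_2^2$ (degree~16). The only differences are cosmetic: the paper first subtracts off multiples of the already-known invariants ($\bar{x}_4^3$, respectively $\bar{x}_{12}\bar{x}_4$ and $\bar{x}_4^4$) to shrink the linear system before solving, and it silently ignores the $t_7$-correction, whereas you keep all basis coefficients and justify explicitly (via the degree-$10$/degree-$14$ observation and regularity of $c_1,t_7$) why the $t_7$-part does not affect the reduced condition $R\equiv 0\pmod{(c_1,t_7)}$.
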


\begin{proof}
  Suppose that $y_{12}\in H^{12}(BT)^G$ satisfies $\varphi_1(y_{12})\equiv y_{12}\mod(c_1,t_7)^2$. Then we can set $y_{12}=a_1q_3+a_2q_2q_1+a_3e_6$ by Lemma \ref{4}. It follows from \eqref{q-c} and \eqref{phi} that
  $$\varphi_1(y_{12})\equiv y_{12}+\left(-\frac{5}{2}a_1-\frac{1}{4}a_3\right)c_5c_1+\left(-3a_2-\frac{1}{2}a_1\right)c_3c_2c_1\mod(c_1,t_7)^2.$$
  Then $y_{12}$ is a multiple of $\bar{x}_{12}$, and thus the first congruence holds.

  Suppose that $y_{16}\in H^{16}(BT)^G$ satisfies $\varphi_1(y_{16})\equiv y_{16}\mod(c_1,t_7)^2$. Then by Lemma \ref{4} and the first congruence, $y_{16}=a_1q_4+a_2q_2^2+a_3q_2q_1^2+a_4q_1e_6$. By \eqref{q-c} and \eqref{phi},
  $$\varphi_1(y)\equiv\left(\frac{3}{2}a_1+\frac{1}{2}a_4\right)c_5c_2c_1+\left(-\frac{1}{2}a_1+6a_2\right)c_4c_3c_1+\left(3a_2+6a_3\right)c_3c_2^2c_1\mod(c_1,t_7)^2.$$
  So $y_{16}$ is a multiple of $\bar{x}_{16}$. Thus one gets the second congruence.
\end{proof}

Since $\varphi(q_1)=q_1$, the ideal $(c_1,t_7)^2+(q_1^n)$ for $n\ge 1$ is stable under the action of $\varphi_1$. Then we can consider elements of $H^*(BT)$ which are invariant under the action of $\varphi_1$ modulo $(c_1,t_7)^2+(q_1^n)$.

\begin{lemma}
  \label{20,24}
  If $y_i\in H^i(BT)^G$ satisfies $\varphi_1(y_i)\equiv x\mod(c_1,t_7)^2+(q_1^2)$ for $i=20,24$, then for some $a,b,a',b',c'\in\Z/p$,
  \begin{align*}
    y_{20}&\equiv a\bar{x}_{20}+b\bar{x}_{16}\bar{x}_4\\
    y_{24}&\equiv a'\bar{x}_{24}+b'\bar{x}_{20}\bar{x}_4+c'\bar{x}_{12}^2\mod(t_7,p_1^2).
  \end{align*}
\end{lemma}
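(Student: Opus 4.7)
The plan is to adapt the argument of Lemma \ref{12,16}, now working with a larger pool of candidate monomials because the ambient ideal is enlarged from $(c_1, t_7)^2$ to $(c_1, t_7)^2 + (q_1^2)$ and the target congruence is modulo $(t_7, p_1^2)$ instead of $(t_7)$.

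First I would list the $G$-invariant monomials of the appropriate degree that are not divisible by $p_1^2$. For $i=20$ these are $p_5,\ p_4 p_1,\ p_3 p_2,\ p_2^2 p_1,\ p_2 e_6$, giving a $5$-parameter ansatz $y_{20} = \sum_{k=1}^{5} a_k m_k$. For $i=24$ the eight monomials $p_5 p_1,\ p_4 p_2,\ p_3^2,\ p_3 p_2 p_1,\ p_3 e_6,\ p_2^3,\ p_2 p_1 e_6,\ e_6^2$ give an $8$-parameter ansatz.

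Next, I would apply $\varphi_1$ to the ansatz using \eqref{phi c}, \eqref{q-c}, \eqref{phi}, together with $\varphi_1(e_6) \equiv e_6 - \tfrac{1}{4} c_5 c_1$ and $\varphi_1(p_1) \equiv p_1$ modulo $(c_1,t_7)^2$ (the latter from $\varphi_1(q_1) = q_1$). For the $e_6^2$ term in $y_{24}$, the Leibniz rule gives $\varphi_1(e_6^2) - e_6^2 \equiv -\tfrac{1}{2} c_5 c_1 e_6$ modulo $(c_1, t_7)^2$. The key reduction used throughout is $c_1 p_j \equiv c_1 q_j$ modulo $(c_1, t_7)^2$, so each $c_1 p_j$ is rewritten as a polynomial in the $c_i$'s via \eqref{q-c}. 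In particular $c_1 p_1 \equiv -2 c_1 c_2$ does not vanish modulo $(q_1^2)$, so it must be retained in the computation; this is the main new phenomenon compared with Lemma \ref{12,16}, where working modulo $(q_1)$ simply killed such terms.

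After substitution, $\varphi_1(y_i) - y_i$ becomes a linear combination of monomials of the form $c_1 M$ (with $M$ a monomial in $c_2,\ldots,c_8$ and $e_6$), modulo $(c_1, t_7)^2 + (q_1^2)$. Requiring this to vanish yields a linear system in the unknown coefficients. The solution space has dimension exactly $2$ for $i=20$ and $3$ for $i=24$, and since $\bar{x}_{20},\bar{x}_{16}\bar{x}_4$ (respectively $\bar{x}_{24},\bar{x}_{20}\bar{x}_4,\bar{x}_{12}^2$) are already linearly independent elements of $H^*(BT)^W$ whose images satisfy the invariance requirement, they must span these solution spaces. The main obstacle is the bookkeeping of distinct monomials in the $c_i$'s and $e_6$, and the verification that after the substitutions the linear system truly cuts down the dimension by the expected amount; the degree-$24$ case is notably heavier both because of the larger number of parameters and because $\bar{x}_{12}^2$ forces the $e_6^2$ contribution (and hence the $c_1 c_5 e_6$ monomial) into the analysis.
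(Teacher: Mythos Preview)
Your plan is correct and follows the same mechanism as the paper: write an ansatz in $G$-invariant monomials, apply $\varphi_1$ using \eqref{phi c}, \eqref{q-c}, \eqref{phi} and $\varphi_1(e_6)\equiv e_6-\tfrac14 c_5c_1$, collect the $c_1$-multiples, and solve the resulting linear system. Your observation that $c_1p_1\equiv -2c_1c_2$ survives modulo $(c_1,t_7)^2+(q_1^2)$ (while $c_2^2$ does not, since $c_2\equiv -\tfrac12 q_1$ there) is exactly the relation the paper exploits.

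The one organisational difference is that the paper does not solve the full $5$- (resp.\ $8$-) parameter system. Instead it first invokes Lemmas~\ref{4} and~\ref{12,16} (and, for $y_{24}$, the already-proved $y_{20}$ case) to strip off multiples of $\bar x_{16}\bar x_4$, $\bar x_{20}\bar x_4$, $\bar x_{12}^2$ from the ansatz, leaving only $4$ (resp.\ $6$) undetermined coefficients, and then shows the remaining solution space is one-dimensional, spanned by $\bar x_{20}$ (resp.\ $\bar x_{24}$). Your direct approach and the paper's reduced approach are equivalent; yours is more self-contained, theirs gives a smaller linear system to display. One small inaccuracy in your write-up: Lemma~\ref{12,16} is computed modulo $(c_1,t_7)^2$, not modulo $(q_1)$; it is Lemma~\ref{28,36} that works modulo $(c_1,t_7)^2+(q_1)$.
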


\begin{proof}
  Suppose that $y_{20}\in H^{20}(BT)^G$ satisfies $\varphi_1(y_{20})\equiv y_{20}\mod(c_1,t_7)^2$. Then by Lemmas \ref{4} and \ref{12,16}, we may set $y_{20}=a_1q_5+a_2q_3q_2+a_3q_2^2q_1+a_4q_2e_6$. Note that $c_2^2\in(c_1,t_7)^2+(q_1^2)$. Then by \eqref{q-c} and \eqref{phi},
  \begin{align*}
    \varphi_1(y)&\equiv\left(\frac{3}{2}a_1-3a_2+\frac{3}{2}a_4\right)c_6c_3c_1+\left(-\frac{1}{2}a_1-5a_2-\frac{1}{2}a_4\right)c_5c_4c_1\\
    &\quad+(-4a_2+12a_3)c_4c_3c_2c_1+\frac{3}{2}a_2c_3^3c_1\mod(c_1,t_7)^2+(q_1^2).
  \end{align*}
  Thus $y_{20}$ is a multiple of $\bar{x}_{20}$, and so the first congruence holds.

  Suppose that $y_{24}\in H^{24}(BT)^G$ satisfies $\varphi_1(y_{24})\equiv y_{24}\mod(c_1,t_7)^2$. Then by Lemmas \ref{4} and \ref{12,16} together with the first congruence, we can set $y_{24}=a_1q_4q_2+a_2q_3q_2q_1+a_3q_2^3+a_4q_3e_6+a_5q_2q_1e_6+a_6e_6^2$. By \eqref{q-c} and \eqref{phi},
  \begin{align*}
    \varphi_1(y)&\equiv y+\left(-2a_4-\frac{1}{2}a_6\right)c_6c_5c_1+\left(3a_1+6a_2-\frac{1}{2}a_4-3a_5\right)c_6c_3c_2c_1\\
    &\quad+\left(3a_1+10a_2+\frac{1}{2}a_4+a_5\right)c_5c_4c_2c_1+\left(-3a_1-\frac{1}{4}a_4\right)c_5c_3^2c_1\\
    &\quad+\left(\frac{1}{2}a_1+18a_3\right)c_4^2c_3c_1-3a_2c_3^3c_2c_1\mod(c_1,t_7)^2+(q_1^2).
  \end{align*}
  Then $y_{24}$ is a multiple of $\bar{x}_{24}$, and so the second congruence holds.
\end{proof}

\begin{remark}
  In \cite{HKO}, an element $-6p_3+p_2p_1-60e_6$ is chosen to be a $\varphi_1$-invariant by mistake, which forces to choose $p_5+p_2e_6$ as a $\varphi_1$-invariant. However, no problem occurs in \cite{HKO} because we do not use the term involving $e_6$.
\end{remark}

\begin{lemma}
  \label{28,36}
  If $y_i\in H^i(BT)^G$ satisfies $\varphi_1(y_i)\equiv y_i\mod(c_1,t_7)^2+(q_1)$ for $i=28,36$, then for some $a,b,a',b',c',d'\in\Z/p$,
  \begin{align*}
    y_{28}&\equiv a\bar{x}_{28}+b\bar{x}_{16}\bar{x}_{12}\\
    y_{36}&\equiv a'\bar{x}_{36}+b'\bar{x}_{24}\bar{x}_{12}+c'\bar{x}_{20}\bar{x}_{16}+d'\bar{x}_{12}^3\mod(t_7,p_1).
  \end{align*}
\end{lemma}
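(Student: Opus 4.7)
The plan is to follow the same template as Lemmas \ref{12,16} and \ref{20,24}: parametrize $y_i$ using the previous lemmas, compute $\varphi_1(y_i)-y_i$, and read off a linear system whose solution space should coincide with the claimed invariants. The new feature is that the hypothesis is relaxed to modulo $(c_1,t_7)^2+(q_1)$; since $q_1=c_1^2-2c_2$, this ideal contains $c_2$, so in the quotient every monomial divisible by $c_2$ also vanishes, a substantial simplification compared to Lemma \ref{20,24}.

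For $y_{28}$, I would first use Lemmas \ref{4}, \ref{12,16}, and \ref{20,24} to absorb all terms carrying a $\bar{x}_4=p_1=q_1$ factor into products like $\bar{x}_{24}\bar{x}_4,\ \bar{x}_{20}\bar{x}_4^2,\ldots,\bar{x}_4^7$, all of which vanish modulo $(t_7,p_1)$. Enumerating the remaining monomials — products in $q_2,\ldots,q_7$ and powers of $e_6$ of the correct degree — yields the six-parameter family
$$y_{28}\equiv a_1q_7+a_2q_5q_2+a_3q_4q_3+a_4q_3q_2^2+a_5q_4e_6+a_6q_2^2e_6\mod(t_7,p_1).$$
I would then compute $\varphi_1(y_{28})-y_{28}$ modulo $(c_1,t_7)^2+(q_1)$ by expanding each $\varphi_1(q_{i_1}\cdots q_{i_r}e_6^k)$ via the Leibniz rule, using \eqref{q-c}, \eqref{phi}, and $\varphi_1(e_6)\equiv e_6-\tfrac{1}{4}c_5c_1$, and discarding every monomial with a $c_2$-factor. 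The result is a linear combination of products $c_{i_1}\cdots c_{i_k}c_1$ with all $i_j\ne 2$; equating each coefficient to zero gives a linear system in $(a_1,\ldots,a_6)$ whose kernel should be two-dimensional. Since $\bar{x}_{28}$ and $\bar{x}_{16}\bar{x}_{12}$ are $\varphi_1$-invariant and linearly independent modulo $(t_7,p_1)$, they will span this kernel, yielding the first congruence.

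For $y_{36}$ the same procedure applies. Listing partitions of polynomial degree $18$ into parts from $\{q_2,\ldots,q_7\}$ together with powers of $e_6$ produces $13$ monomials, parametrizing $y_{36}$ modulo $(t_7,p_1)$ by $(a_1,\ldots,a_{13})$. Computing $\varphi_1(y_{36})-y_{36}$ modulo $(c_1,t_7)^2+(q_1)$ in the same way yields a linear system whose kernel is expected to be four-dimensional, spanned by the coefficient vectors of $\bar{x}_{36}$, $\bar{x}_{24}\bar{x}_{12}$, $\bar{x}_{20}\bar{x}_{16}$, and $\bar{x}_{12}^3$.

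The hard part will be the bookkeeping for $y_{36}$: tracking the coefficient of each cubic or quartic monomial $c_{i_1}c_{i_2}c_{i_3}c_1$ (no $c_2$-factor) in $\varphi_1(y_{36})-y_{36}$ and confirming that the solution space has exactly the expected four-dimensional kernel. The $c_2\equiv 0$ simplification from the extra $(q_1)$ kills a substantial fraction of candidate monomials, and the Leibniz rule confines contributions to first order in $c_1$ modulo $(c_1,t_7)^2$, so the computation is tractable if lengthy.
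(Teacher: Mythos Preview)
Your approach matches the paper's: parametrize modulo $(t_7,p_1)$, apply \eqref{phi} together with the $c_2\equiv 0$ simplification coming from $(q_1)$, and solve the resulting linear system. The only differences are bookkeeping: the paper first subtracts the known invariant products $\bar{x}_{16}\bar{x}_{12}$ (in degree $28$) and $\bar{x}_{24}\bar{x}_{12},\ \bar{x}_{20}\bar{x}_{16},\ \bar{x}_{12}^3$ (in degree $36$) before parametrizing, obtaining $4$- and $7$-parameter families whose invariance forces a \emph{one}-dimensional solution, and you should drop $q_6,q_7$ from your generating set since $H^*(BT)^G$ is generated over $\Z/p$ by $t_7,p_1,\ldots,p_5,e_6$ (modulo $t_7$ one has $q_6\equiv e_6^2$ and $q_7\equiv 0$), which brings your honest parameter counts down to $5$ and $10$.
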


\begin{proof}
  Suppose that $y_{28}\in H^{28}(BT)^G$ satisfies $\varphi_1(y_{28})\equiv y_{28}\mod(c_1,t_7)^2+(q_1)$. Then by Lemmas \ref{4}, \ref{12,16} and \ref{20,24}, one may set $y_{28}=a_1q_5q_2+a_2q_3q_2^2+a_3q_4e_6+a_4q_2^2e_6$. Note that $c_2\in(c_1,t_7)^2+(q_1)$. Then by \eqref{q-c} and \eqref{phi},
  \begin{align*}
    \varphi_1(y_{28})&\equiv y_{28}+\left(-12a_2-\frac{1}{2}a_3+6a_4\right)c_6c_4c_3c_1+\left(\frac{3}{2}a_1+\frac{1}{2}a_3\right)c_5^2c_3c_1\\
    &\quad\left(-a_1-10a_2-\frac{1}{4}a_3-a_4\right)c_5c_4^2c_1+6a_2c_4c_3^3c_1\mod(c_1,t_7)^2+(q_1).
  \end{align*}
  Then $y_{28}$ is a multiple of $\bar{x}_{28}$, and thus the first congruence holds.

  Suppose that $y_{36}\in H^{36}(BT)^G$ satisfies $\varphi_1(y_{36})\equiv y_{36}\mod(c_1,t_7)^2+(q_1)$. Then by Lemmas \ref{4}, \ref{12,16} and \ref{20,24}, one may set $y_{36}=a_1q_5q_2^2+a_2q_3q_2^3+a_3q_4q_2e_6+a_4q_3^2e_6+a_5q_3e_6^2+a_6q_2^3e_6+a_7e_6^3$, and by \eqref{q-c} and \eqref{phi},
  \begin{align*}
    \varphi_1(y_{36})&\equiv y_{36}+\left(9a_4-\frac{3}{2}a_5-\frac{3}{4}a_7\right)c_6^2c_5c_1+\left(-3a_3-4a_4-\frac{1}{2}a_5\right)c_6c_5c_3^2c_1\\
    &\quad+\left(-6a_1-36a_2+\frac{1}{2}a_3+18a_6\right)c_6c_4^2c_3c_1+(6a_1+a_3)c_5^2c_4c_3c_1\\
    &\quad+\left(-2a_1-20a_2-\frac{1}{2}a_3-2a_6\right)c_5c_4^3c_1-\frac{1}{4}a_4c_5c_3^4c_1+18a_2c_4^2c_3^3c_1\\
    &\mod(c_1,t_7)^2+(q_1).
  \end{align*}
  So one can see that $y_{36}$ is a multiple of $\bar{x}_{36}$, completing the proof.
\end{proof}

As in \cite{A}, there is a map $j\colon Spin(12)\to E_7$ which is identified with the composite
$$H^*(BT)^W\xrightarrow{\rm incl}H^*(BT)^G\xrightarrow{\rm proj}\Z/p[p_1,\ldots,p_5,e_6].$$
Then by Lemmas \ref{4}, \ref{12,16}, \ref{20,24} and \ref{28,36}, one gets:

\begin{theorem}
  \label{BE_7}
  For $p>3$, the mod $p$ cohomology of $BE_7$ is given by
  $$H^*(BE_7)=\Z/p[x_4,x_{12},x_{16},x_{20},x_{24},x_{28},x_{36}]$$
  such that
  \begin{alignat*}{4}
    j^*(x_i)&=\bar{x}_i&&(i=4,12,16)\qquad&j^*(x_i)&\equiv\bar{x}_i\mod(p_1^2)\quad&(i=20,24)\\
    j^*(x_i)&\equiv\bar{x}_i\mod(p_1)\quad&&(i=28,36).
  \end{alignat*}
\end{theorem}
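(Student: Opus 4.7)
The plan is to extract polynomial generators $x_i$ of $H^*(BE_7)$ from the preceding four lemmas, degree by degree, and then verify they form a polynomial system of generators.

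The key preliminary fact I would invoke is that for any prime $p>3$, $E_7$ is $p$-torsion free, so $H^*(BE_7;\Z/p)$ is already known to be a polynomial algebra on seven generators in degrees $4,12,16,20,24,28,36$. Hence the task is not to prove polynomiality but to \emph{choose} generators with the prescribed behavior under $j^*$. For the base case I take $x_4:=p_1=q_1$, which is $G$-invariant by definition and $\varphi_1$-invariant by the identity $\varphi_1(q_1)=q_1$ used in the derivation of Lemma~\ref{4}; thus $x_4\in H^*(BT)^W$ and $j^*(x_4)=p_1=\bar x_4$.

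Inductively, suppose $x_4,\ldots,x_{i-4}$ have been constructed with the prescribed $j^*$-behavior. I would pick any $y_i\in H^i(BT)^W$ which is indecomposable modulo the subalgebra generated by the previously constructed $x_j$; such a $y_i$ exists by the polynomial structure of $H^*(BE_7)$. Being a genuine $\varphi_1$-invariant, $y_i$ satisfies the hypothesis of whichever of Lemmas~\ref{12,16}, \ref{20,24}, \ref{28,36} pertains to the degree, and the conclusion gives
$$
j^*(y_i)\equiv a\,\bar x_i+(\text{products of lower }\bar x_j)\pmod{\mathfrak a_i},
$$
where $\mathfrak a_i$ is $(0)$, $(p_1^2)$, or $(p_1)$ as dictated by the degree (the $(t_7)$ part of the lemma's ideal is already killed by projecting to $\Z/p[p_1,\ldots,p_5,e_6]$). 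By the inductive hypothesis the product terms are $j^*$-images of explicit polynomials in the $x_j$, so subtracting those polynomials from $y_i$ removes them modulo $\mathfrak a_i$; rescaling by $a^{-1}$ then yields $x_i$ with $j^*(x_i)\equiv \bar x_i\pmod{\mathfrak a_i}$.

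The one point that is not a formal consequence of the lemmas is the nonvanishing of the leading coefficient $a$ at each step. If $a$ were zero, then the lemma plus the inductive hypothesis would present $j^*(y_i)$ modulo $\mathfrak a_i$ as $j^*$ of a polynomial in the lower $x_j$, and algebraic independence of $\bar x_4,\ldots,\bar x_{i-4}$ modulo $\mathfrak a_i$ in $\Z/p[p_1,\ldots,p_5,e_6]$ would force $y_i$ to equal that polynomial in $H^*(BE_7)$, contradicting indecomposability. I expect this nonvanishing check, degree by degree, to be the main obstacle, though it is quite routine once set up. Once the seven $x_i$ are in place, algebraic independence is inherited from that of the $\bar x_i$ modulo $\mathfrak a_i$, and a Poincaré series comparison with the known structure of $H^*(BE_7)$ forces the polynomial subalgebra they generate to exhaust $H^*(BE_7)$, completing the identification.
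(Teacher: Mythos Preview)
Your approach is the same as the paper's, which simply writes ``by Lemmas~\ref{4}, \ref{12,16}, \ref{20,24} and \ref{28,36}, one gets'' and nothing more; your write-up is in fact considerably more detailed and mostly sound.

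There is, however, a genuine slip in your justification of $a\neq 0$.  You argue that if $a=0$ then $j^*(y_i)\equiv j^*(P(x_j))\bmod\mathfrak a_i$ for some polynomial $P$ in the lower generators, and that ``algebraic independence of $\bar x_4,\ldots,\bar x_{i-4}$ modulo $\mathfrak a_i$'' then forces $y_i=P(x_j)$ in $H^*(BE_7)$.  Two problems: first, the $\bar x_j$ are \emph{not} algebraically independent modulo $\mathfrak a_i$ in the cases $\mathfrak a_i=(p_1)$ or $(p_1^2)$, since $\bar x_4=p_1$ already lies in the ideal; second, even where they are independent, that statement only rules out relations among the $\bar x_j$ themselves --- it says nothing about whether a \emph{new} indecomposable $y_i$ can have $j^*$-image landing in the span of products modulo $\mathfrak a_i$.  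What you actually need is that the composite
\[
H^i(BE_7)\;\hookrightarrow\;H^i(BT)^G\;\longrightarrow\;H^i(BT)^G/(t_7,\mathfrak a_i)
\]
has kernel contained in the decomposables, i.e.\ that no indecomposable $W$-invariant lies in $(t_7)+\mathfrak a_i$.  This is a separate (and indeed routine) check, but it is not a consequence of algebraic independence of the $\bar x_j$.  A cleaner way to phrase the argument is a dimension count: the lemmas bound the dimension of the $\varphi_1$-invariants-mod-ideal in degree $i$ (modulo products of lower $\bar x_j$) by~$1$, and this matches the known $1$-dimensional indecomposable quotient of $H^i(BE_7)$; hence the map between them is an isomorphism precisely when it is injective, and that injectivity is what has to be verified.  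Once you make this substitution, your proof goes through.
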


\begin{corollary}
  \label{P^1 E_7}
  \begin{enumerate}
    \item In $H^*(BE_7)$ for $p=5$, $\P^2x_{16},\,(\P^1)^2x_{24},\,(\P^1)^3x_{36}$ are decomposable such that
    $$(\P^1)^2x_{16}>-x_{20}x_{12},\quad\P^1x_{24}>2x_{16}^2,\quad(\P^1)^3x_{36}>x_{36}x_{24}.$$
    \item In $H^*(BE_7)$ for $p=7$, $\P^1x_{20}$, $\P^1x_{28}$ , $\P^1x_{36}$ are decomposable such that
    $$\P^1x_{20}>-2x_{28}x_4+x_{20}x_{12},\quad\P^1x_{28}>-2x_{28}x_{12}+4x_{20}^2,\quad\P^1x_{36}>x_{28}x_{20}.$$
  \end{enumerate}
\end{corollary}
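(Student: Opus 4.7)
The proof plan follows the blueprint of Lemma \ref{P^1 E_8}, combining Theorem \ref{BE_7} with the mod $p$ Wu formula \eqref{mod p Wu formula}. In every case, decomposability is immediate from a degree check, since no generator of $H^*(BE_7)$ listed in Theorem \ref{BE_7} has the relevant degree (degree $32$ or $60$ for $p=5$; degree $32$, $40$, or $48$ for $p=7$). Since Theorem \ref{BE_7} gives $j^*x_i\equiv\bar{x}_i$ modulo $(p_1)$ or $(p_1^2)$ and $j^*$ is injective onto its image, it suffices to compute the Steenrod operation on each $\bar{x}_i$ in $\Z/p[p_1,\ldots,p_5,e_6]$ via \eqref{mod p Wu formula} and match the resulting polynomial to a linear combination of products $\bar{x}_a\bar{x}_b$ modulo the appropriate ideal.

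For $p=7$, I would apply $\P^1$ once to each of $\bar{x}_{20}=p_5-p_2e_6$, $\bar{x}_{28}=p_5p_2-3p_4e_6-\tfrac14 p_2^2 e_6$, and $\bar{x}_{36}$. The list $\bar{x}_4,\bar{x}_{12},\bar{x}_{16},\bar{x}_{20},\bar{x}_{24},\bar{x}_{28},\bar{x}_{36}$ has distinct leading Pontrjagin monomials $p_1$, $p_3$, $p_4$, $p_5$, $p_4p_2$, $p_5p_2$, $p_5p_2^2$ respectively, so for each target product $\bar{x}_a\bar{x}_b$ one can find a monomial appearing in $\bar{x}_a\bar{x}_b$ but not in any other degree-matching basis product, after reductions modulo $(p_1)$ or $(p_1^2)$. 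For instance, $p_5p_3$ appears in degree $32$ only through $\bar{x}_{20}\bar{x}_{12}$ (with coefficient $-6$ from the $-6p_3$ in $\bar{x}_{12}$), so its coefficient in $\P^1\bar{x}_{20}$ pins down the coefficient of $\bar{x}_{20}\bar{x}_{12}$; subtracting this contribution, $p_5p_2p_1$ then isolates $\bar{x}_{28}\bar{x}_4$. Executing the matching yields the coefficients $-2,+1$ and, by the same method, $-2,+4$ for $\P^1\bar{x}_{28}$ and $+1$ for $\P^1\bar{x}_{36}$.

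For $p=5$ the strategy is identical, but $\P^1$ must be iterated twice for $(\P^1)^2 x_{16}$ and once more for $(\P^1)^2 x_{24}$ (both targeting degree $32$), and three times for $(\P^1)^3 x_{36}$ (targeting degree $60$). The main obstacle is bookkeeping: iterated Wu expansion produces many monomials of various $e_6$- and $p_1$-orders, and one must reduce repeatedly modulo $(p_1)$ or $(p_1^2)$ and subtract contributions of already-matched products before extracting each new distinguishing monomial. I expect $(\P^1)^3 x_{36}$ to be the heaviest single calculation, but the triangular shape of $\{\bar{x}_i\}$ keeps the resulting linear algebra tractable, and once the target monomials are fixed the verification is mechanical.
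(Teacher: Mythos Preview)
Your plan is correct and matches the paper's approach at the level of ingredients: degree count for decomposability, Theorem~\ref{BE_7} for the explicit $\bar{x}_i$, and the Wu formula \eqref{mod p Wu formula} to compute Steenrod operations in $H^*(BSpin(12))$. Two points are worth noting. First, for the iterated operations at $p=5$ the paper does \emph{not} iterate the Wu formula in $\Z/5[p_1,\dots,p_5,e_6]$ as you propose; instead it computes each single $\P^1x_i$ once, expresses it back in the generators of $H^*(BE_7)$ (e.g.\ $\P^1x_{16}>x_{20}x_4$, $\P^1x_4>-x_{12}$, $\P^1x_{36}>x_{28}x_{16}-x_{24}x_{20}$, $\P^1x_{16}>3x_{24}$, $\P^1x_{20}>x_{28}$, $\P^1x_{28}>3x_{36}$), and then iterates by the Cartan formula in $H^*(BE_7)$. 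This cuts the bookkeeping for $(\P^1)^3x_{36}$ dramatically and avoids your ``heaviest single calculation''. Second, since Theorem~\ref{BE_7} pins down $j^*(x_i)$ only modulo $(p_1)$ or $(p_1^2)$ for $i\ge 20$, the paper repeatedly exploits the freedom to adjust generators (e.g.\ ``we may assume $j^*(x_{20})$ does not include $p_3p_1^2$'') and works modulo carefully chosen ideals such as $(p_3,p_1,e_6)$ or $(p_4,p_2^3,p_1,e_6)$ that absorb both the ambiguity and $\P^1((p_1))$; your distinguishing-monomial scheme works for the examples you list, but you should make this control explicit. (Minor: the displayed formula asks for $\P^1x_{24}$, not $(\P^1)^2x_{24}$; your ``targeting degree~$32$'' is right, so this is just a slip inherited from a typo in the statement.)
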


\begin{proof}
  (1) By a degree reason, $(\P^1)^2x_{16},\;\P^1x_{24},\;(\P^1)^3x_{36}$ are decomposable. By \eqref{mod p Wu formula}, $\P^1p_4>3p_5p_1$ and $\P^1p_1>p_3$. One also has $\P^1e_6\in(e_6)$. Then by Theorem \ref{BE_7},
  $$\P^1j^*(x_{16})\equiv p_5p_1\mod(p_2,p_1^2,e_6)\quad\text{and}\P^1j^*(x_4)\equiv p_3\mod(p_2,p_1,e_6),$$
  so that $\P^1x_{16}>x_{20}x_4$ and $\P^1x_4>-x_{12}$. Thus $\P^2x_{16}>-x_{20}x_{12}$.

  By \eqref{mod p Wu formula}, $\P^1(p_4p_2)>3p_4^2$. Then by Theorem \ref{BE_7},
  $$\P^1j^*(x_{24})\equiv 3p_4^2\mod(p_2^2,p_1^2,e_6),$$
  implying $\P^1x_{24}>2x_{16}^2$.

  By \eqref{mod p Wu formula}, $\P^1(p_5p_2^2)>p_5p_4p_2+p_5p_2^3$. Then by Theorem \ref{BE_7} and $\P^1p_1\in(p_3,p_1)$,
  $$\P^1j^*(x_{36})\equiv p_5p_4p_2+p_5p_2^3\mod(p_3,p_1,e_6),$$
  implying $\P^1x_{36}>x_{28}x_{16}-x_{24}x_{20}$. By the same argument, we can show that $\P^1x_{16}>3x_{24},\;\P^1x_{20}>x_{28},\;\P^1x_{28}>3x_{36}$. Then $(\P^1)^3x_{36}>x_{36}x_{24}$.

  (2) By a degree reason, $\P^1x_{20}$ and $\P^1x_{28}$ are decomposable. By \eqref{mod p Wu formula}, $\P^1p_1\in(p_4,p_2^2,p_1)$, $\P^1p_5>-p_5p_3+p_5p_2p_1$ and $\P^1(p_5p_2-3p_4p_3)>4p_5^2+2p_5p_3p_2$. Since we can assume that $j^*(x_{20})$ does not include a multiple of $p_3p_1^2$, it follows from $\P^1e_6\in(e_6)$ and Theorem \ref{BE_7} that
  $$\P^1j^*(x_{20})\equiv-p_5p_3+p_5p_2p_1\mod(p_1^2,e_6),$$
  implying $\P^1x_{20}>2x_{28}x_4-x_{20}x_{12}$.

  By \eqref{mod p Wu formula}, $\P^1p_1\in(p_4,p_2^2,p_1)$ and $\P^1(p_5p_2-3p_4p_3)>4p_5^2+2p_5p_3p_2$. Since we may assume that $j^*(x_{28})$ does not include a multiple of $p_5p_3p_1$, it follows from $\P^1e_6\in(e_6)$ and Theorem \ref{BE_7} that
  $$\P^1j^*(x_{28})\equiv p_5^2p_2-p_5p_3p_2^2\mod(p_4,p_2^3,p_1,e_6)$$
  Then $\P^1x_{28}=2x_{28}x_{12}+4x_{20}^2$.

  By \eqref{mod p Wu formula}, $\P^1(p_5p_2^2)=p_5^2p_2$. Then by Theorem \ref{BE_7}, $\P^1x_{36}$ must include $x_{28}x_{20}$.
\end{proof}

As well as $E_8$, we make an alternative choice of generators of the mod 7 cohomology of $BE_7$.

\begin{lemma}
  \label{E_7 mod 7 cohomology}
  The mod 7 cohomology of $BE_7$ is given by
  $$H^*(BE_7)=\Z/7[y_4,y_{12},y_{16},y_{20},y_{24},y_{28},y_{36}],\quad|y_i|=i$$
  such that
  $$\P^1y_{12}=y_{24},\quad\P^1y_{24}=y_{36},\quad\P^1y_{36}>5y_{36}y_{12}.$$
\end{lemma}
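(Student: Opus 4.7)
The plan is to follow the strategy used in Lemma~\ref{E_8 mod 7 cohomology}. The mod $7$ decomposition
\[
E_7\simeq_{(7)}B(3,15,27)\times B(11,23,35)\times S^{19}
\]
deloops to a product splitting of $BE_7$, so that $H^*(BE_7;\Z/7)$ decomposes as a tensor product with the factor corresponding to $BB(11,23,35)$ being polynomial on three generators in degrees $12,24,36$. Because $H^*(B(11,23,35))=\Lambda(z_{11},\P^1z_{11},(\P^1)^2z_{11})$ (cf.\ \cite{MNT}) and $\P^1$ commutes with transgression, these three generators may be chosen so as to be linked by $\P^1$. I would therefore start with a suitable rescaling $y_{12}$ of $x_{12}$ and set $y_{24}:=\P^1y_{12}$, $y_{36}:=\P^1y_{24}$; combined with $y_4:=x_4$, $y_{16}:=x_{16}$, $y_{20}:=x_{20}$, $y_{28}:=x_{28}$ this gives a polynomial basis for which $\P^1y_{12}=y_{24}$ and $\P^1y_{24}=y_{36}$ hold by definition.

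To verify that $y_{24}$ and $y_{36}$ thus defined are genuine algebra generators (rather than decomposable), I would pull back to $H^*(BSpin(12))$ via $j^*$ of Theorem~\ref{BE_7} and apply the mod $p$ Wu formula \eqref{mod p Wu formula}. Using $j^*(x_{12})\equiv -6p_3\pmod{(p_1,e_6)}$, a short calculation shows that the $p_6$-contribution of $\P^1 p_3$ is non-zero mod $7$, so $\P^1 x_{12}$ has a non-trivial indecomposable part; iterating yields the analogous statement for $\P^1 x_{24}$.

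For the remaining assertion $\P^1y_{36}>5y_{36}y_{12}$, the target $\P^1y_{36}$ lies in degree $48$, where $H^*(BE_7;\Z/7)$ has no algebra generator, and is therefore decomposable. To extract the coefficient of $y_{36}y_{12}$, I would pull back via $j^*$, compute $\P^1 j^*(y_{36})$ using the Wu formula, and reduce modulo an ideal fine enough to isolate $\bar{x}_{36}\bar{x}_{12}$ from the other decomposable candidates in degree $48$. Noting that under the reduction $(p_1,e_6)$ one has $\bar{x}_{12}\equiv p_3$ and $\bar{x}_{36}\equiv p_5p_2^2$, the natural witness monomial is $p_5p_3p_2^2$; one must then augment the reduction ideal so as to kill the contributions from $\bar{x}_{28}\bar{x}_{20}$, $\bar{x}_{24}^{2}$, and especially $\bar{x}_{20}\bar{x}_{16}\bar{x}_{12}$ (which also carries a $p_5p_3p_2^2$-term), and verify that the coefficient of the surviving monomial is $5\pmod 7$.

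The main obstacle is the combinatorial bookkeeping in this last step: since $y_{36}=(\P^1)^2y_{12}$, expanding $\P^1j^*(y_{36})$ through three applications of the Wu formula produces many monomials of degree $48$ in $H^*(BSpin(12))$, and the reduction ideal must be selected with care so as to distinguish the $\bar{x}_{36}\bar{x}_{12}$-contribution from that of $\bar{x}_{20}\bar{x}_{16}\bar{x}_{12}$ and $\bar{x}_{24}^{2}$. The parallel argument in Lemma~\ref{E_8 mod 7 cohomology} (where the single monomial $p_7p_6p_5$ picks out $y_{48}y_{24}$) indicates that a suitable reduction of the form $(p_1,e_6)$ augmented by a few higher-order monomials will suffice.
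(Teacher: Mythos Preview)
Your strategy matches the paper's: take $y_{12}$ with $j^*(y_{12})=\bar{x}_{12}$, set $y_{24}:=\P^1y_{12}$ and $y_{36}:=\P^1y_{24}$, justify indecomposability via the structure of $H^*(B(11,23,35))$ from \cite{MNT}, and then read off the coefficient of $y_{36}y_{12}$ in $\P^1y_{36}$ by computing $\P^1j^*(y_{36})$ with the Wu formula and isolating the monomial $p_5p_3p_2^2$.

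Two small points. First, the mod $7$ decomposition of $E_7$ does \emph{not} deloop to a product splitting of $BE_7$; your parenthetical transgression argument (that $\P^1$ commutes with suspension and $\P^1z_{11}\ne 0$ forces $\P^1y_{12}$ to be indecomposable) is the correct justification, and is exactly what the paper invokes via \cite{MNT}. The separate check of a ``$p_6$''-term is unnecessary (and in $H^*(BSpin(12))$ there is no $p_6$, only $e_6^2$).

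Second, the paper resolves the interference from $y_{20}y_{16}y_{12}$ not by adding ``higher-order monomials'' to the reduction ideal but by adding the single element $p_4+3p_2^2$, which modulo $(p_1,e_6)$ is a unit multiple of $j^*(y_{16})$. Working modulo $(p_1,\,p_4+3p_2^2,\,e_6)$ therefore kills every decomposable involving $y_4$ or $y_{16}$ outright; the paper then records
\[
j^*(y_{36})\equiv 3p_5p_2^2+2p_3^3,\qquad \P^1j^*(y_{36})\equiv 3p_5^2p_2+p_5p_3p_2^2+p_3^4+5p_3^2p_2^3
\]
modulo this ideal. Since $j^*(y_{36}y_{12})\equiv 3p_5p_3p_2^2+2p_3^4$ and no other surviving degree-$48$ decomposable carries $p_5p_3p_2^2$ (in particular $j^*(y_{24})$ has no $p_5$ for degree reasons, and $j^*(y_{28}y_{20})\equiv p_5^2p_2$), the coefficient $1$ on $p_5p_3p_2^2$ yields $1\cdot 3^{-1}\equiv 5\pmod 7$, i.e.\ $\P^1y_{36}>5y_{36}y_{12}$.
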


\begin{proof}
  By Theorem \ref{BE_7}, we can choose $y_{12}$ such that $j^*(y_{12})=-6p_3+p_2p_1+60e_6$. As in \cite{MNT}, the mod 7 cohomology of $B(11,23,35)$ is given by
  $$H^*(B(11,23,35))=\Lambda(z_{11},\P^1z_{11},(\P^1)^2z_{11}),\quad|z_{11}|=11.$$
  Then we can choose $y_{24}$ and $y_{36}$ such that  $y_{24}=\P^1y_{12}$ and $y_{36}=\P^1y_{24}$. In particular, we can see from \eqref{mod p Wu formula} that
  \begin{align*}
    j^*(y_{36})&\equiv 3p_5p_2^2+2p_3^3&&\mod(p_1,e_6)\\
    \P^1j^*(y_{36})&\equiv 3p_5^2p_2+p_5p_3p_2^2+p_3^4+5p_3^2p_2^3&&\mod(p_1,p_4+3p_2^2,e_6).
  \end{align*}
  We may choose $y_{16}$ to be $x_{16}$ in Theorem \ref{BE_7}, so that $j^*(y_{16})=12p_4+p_2^2-\frac{1}{2}p_2p_1^2-36p_1e_6$. Thus by looking at the term $p_5p_3p_2^2$ in $\P^1j^*(y_{36})$, we obtain $\P^1y_{36}>5y_{36}y_{12}$.
\end{proof}


\section{Proofs of Theorems \ref{main E_7} and \ref{main E_8}}

We refine a criterion for non-triviality of Samelson products used in \cite{HK2,HKMO,HKO,HKST,KK}. See \cite{KT} for another refinement.

\begin{lemma}
  \label{criterion}
  Let $X$ be a simply-connected space such that
  $$H^*(X)=\Z/p[x_1,\ldots,x_n],\quad|x_1|<\cdots<|x_n|$$
  and let $\bar{\alpha}\colon\Sigma A\to X$ and $\bar{\beta}\colon \Sigma B\to X$ be maps. Given a cohomology operation $\theta$, suppose that there is $1\le i,j,k\le n$ satisfying the following conditions:
  \begin{enumerate}
    \item $\bar{\alpha}^*(x_i)\ne 0$ and $\bar{\beta}^*(x_j)\ne 0$;

    \item $\dim A=|x_i|-1$ and $\dim B=|x_j|-1$.

    \item $\theta x_k$ is decomposable such that $\theta x_k>cx_ix_j$ for $c\ne 0$;

    \item for any extension $\mu\colon\Sigma A\times\Sigma B\to X$ of $\bar{\alpha}\vee\bar{\beta}\colon\Sigma A\vee\Sigma B\to X$, $\theta\mu^*(x_k)=0$.
  \end{enumerate}
  Then
  $$\sam{\alpha}{\beta}\ne 0$$
  where $\alpha\colon A\to\Omega X$ and $\beta\colon B\to\Omega X$ are the adjoint of $\bar{\alpha}$ and $\bar{\beta}$, respectively.
\end{lemma}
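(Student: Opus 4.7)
The plan is to suppose $\sam{\alpha}{\beta}=0$ and contradict condition (4). Without loss of generality take $i\le j$. By the standard adjoint correspondence between Samelson and Whitehead products, $\sam{\alpha}{\beta}=0$ is equivalent to the Whitehead product $[\bar{\alpha},\bar{\beta}]\colon \Sigma A\wedge\Sigma B\to X$ being null, and hence equivalent to $\bar{\alpha}\vee\bar{\beta}$ admitting an extension $\mu\colon\Sigma A\times\Sigma B\to X$. Fix such a $\mu$. By naturality of $\theta$ together with condition (4), $\mu^*(\theta x_k)=\theta\mu^*(x_k)=0$. I will show $\mu^*(\theta x_k)\ne 0$ directly, which yields the contradiction.

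By the K\"unneth theorem, $H^*(\Sigma A\times\Sigma B)\cong H^*(\Sigma A)\otimes H^*(\Sigma B)$, and each $\mu^*(x_l)$ decomposes as
$$\mu^*(x_l)=a_l\otimes 1+1\otimes b_l+\gamma_l,\quad a_l=\bar{\alpha}^*(x_l),\ b_l=\bar{\beta}^*(x_l),\ \gamma_l\in\widetilde{H}^*(\Sigma A)\otimes\widetilde{H}^*(\Sigma B).$$
Since cup products on the reduced cohomology of a suspension vanish, any product of two positive-degree classes pulled back from $\Sigma A$ (respectively from $\Sigma B$) is zero, and consequently any product of three or more positive-degree classes in $H^*(\Sigma A\times\Sigma B)$ vanishes. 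Each $|x_l|$ is even (otherwise $x_l^2=0$ in a graded-commutative algebra over $\Z/p$, $p$ odd, contradicting polynomiality), so a short expansion yields
$$\mu^*(x_lx_m)=a_l\otimes b_m+a_m\otimes b_l.$$

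Since $\theta x_k$ is decomposable, write $\theta x_k=\sum_{l\le m}c_{lm}x_lx_m+R$ where $R$ is a sum of weight-$\ge 3$ decomposables. Then $\mu^*(R)=0$ by the paragraph above, so
$$\mu^*(\theta x_k)=\sum_{l\le m}c_{lm}(a_l\otimes b_m+a_m\otimes b_l).$$
Now focus on the bidegree $(|x_i|,|x_j|)$ component. A summand $a_l\otimes b_m$ has bidegree $(|x_l|,|x_m|)$, and since the degrees $|x_1|<\cdots<|x_n|$ are strictly increasing, only the pair $(l,m)=(i,j)$ (with $l\le m$) can contribute. If $i<j$ this produces $c\cdot a_i\otimes b_j$, while if $i=j$ the monomial is $x_i^2$ and produces $2c\cdot a_i\otimes b_i$. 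By condition (2), $H^{|x_i|}(\Sigma A)$ is the top cohomology of $\Sigma A$, and by condition (1), $a_i\ne 0$; similarly $b_j\ne 0$ in the top cohomology of $\Sigma B$. Hence $a_i\otimes b_j\ne 0$, and together with $c\ne 0$ and $p$ odd, the $(|x_i|,|x_j|)$-bidegree component of $\mu^*(\theta x_k)$ is nonzero, contradicting $\mu^*(\theta x_k)=0$.

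The main obstacle is that $\theta x_k$ can contain many decomposable terms beyond the distinguished monomial $cx_ix_j$. Triviality of cup products on suspensions kills all weight-$\ge 3$ summands uniformly, and the bidegree refinement of the K\"unneth decomposition isolates the contribution of $cx_ix_j$ from that of every other quadratic monomial in bidegree $(|x_i|,|x_j|)$, reducing the verification to the simple observation that $a_i\otimes b_j\ne 0$.
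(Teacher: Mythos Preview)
Your proof is correct and follows essentially the same approach as the paper: assume $\sam{\alpha}{\beta}=0$, obtain an extension $\mu$ over $\Sigma A\times\Sigma B$, and contradict condition~(4) by showing $\mu^*(\theta x_k)\ne 0$ through the $cx_ix_j$ term. The only cosmetic difference is that you isolate this term via the K\"unneth bidegree decomposition, whereas the paper uses the dimension hypothesis~(2) directly to argue that every decomposable monomial other than $x_ix_j$ pulls back to zero under~$\mu$.
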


\begin{proof}
  Suppose $\sam{\alpha}{\beta}=0$. Then since the Whitehead product $[\bar{\alpha},\bar{\beta}]$ is the adjoint of $\sam{\alpha}{\beta}$, $[\bar{\alpha},\bar{\beta}]=0$, implying that there is a homotopy commutative diagram
  $$\xymatrix{\Sigma A\vee\Sigma B\ar[r]^(.65){\bar{\alpha}\vee\bar{\beta}}\ar[d]&X\ar@{=}[d]\\
  \Sigma A\times\Sigma B\ar[r]^(.65)\mu&X}$$
  for some map $\mu$. So
  $$\mu^*(x_i)\equiv\bar{\alpha}^*(x_i),\quad\mu^*(x_j)\equiv\bar{\beta}^*(x_j)\mod\widetilde{H}^*(\Sigma A\times\Sigma B)^2.$$
  By $|x_1|<\cdots<|x_n|$ and the conditions (1) and (2), $\mu^*(y)=0$ for any decomposable monomial $y$ which is not a multiple of $x_ix_j$. Then it follows from the condition (3) that
  $$\mu^*(\theta x_k)=\mu^*(cx_ix_j)=c\mu^*(x_i)\mu^*(x_j)=c(\bar{\alpha}^*(x_i)\times\bar{\beta}^*(x_j))\ne 0.$$
  On the other hands, by the condition (4), $\theta\mu^*(x_k)=0$. Thus we obtain a contradiction, and therefore $\sam{\alpha}{\beta}\ne 0$.
\end{proof}

\begin{lemma}
  \label{E_7 p=5}
  Let $\epsilon_1\colon B(3,11,19,27,35)\to(E_7)_{(5)}$ and $\epsilon_2\colon B(15,23)\to(E_7)_{(5)}$ be the inclusions. Then for each $i,j$, $\langle\epsilon_i,\epsilon_j\rangle\ne 0$.
\end{lemma}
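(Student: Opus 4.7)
The plan is to apply Lemma~\ref{criterion} three times, once each for $\langle\epsilon_1,\epsilon_1\rangle$, $\langle\epsilon_1,\epsilon_2\rangle$, and $\langle\epsilon_2,\epsilon_2\rangle$; the case $\langle\epsilon_2,\epsilon_1\rangle$ then follows from $\langle\epsilon_1,\epsilon_2\rangle$ by the (anti)symmetry of Samelson products. Since Samelson products are natural with respect to precomposition, it suffices in each case to exhibit non-triviality after restricting $\epsilon_1$ or $\epsilon_2$ to a single spherical cell $S^n\subset B(3,11,19,27,35)$ or $S^n\subset B(15,23)$. The adjoint $\bar\alpha:S^{n+1}\to B(E_7)_{(5)}$ of such a restriction detects the generator $x_{n+1}\in H^*(BE_7)$ transgressed from the corresponding factor, so conditions (1) and (2) of Lemma~\ref{criterion} are automatic once the degrees are chosen to match.

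Condition (3) is supplied by Corollary~\ref{P^1 E_7}(1), whose three relations handle the three cases exactly. For $\langle\epsilon_1,\epsilon_1\rangle$ take $\theta=(\P^1)^2$, $x_k=x_{16}$, $A=S^{19}$, and $B=S^{11}$, using $(\P^1)^2 x_{16}>-x_{20}x_{12}$; both $x_{20}$ and $x_{12}$ transgress from generators of $B(3,11,19,27,35)$. For $\langle\epsilon_1,\epsilon_2\rangle$ take $\theta=(\P^1)^3$, $x_k=x_{36}$, $A=S^{35}\subset B(3,11,19,27,35)$, and $B=S^{23}\subset B(15,23)$, using $(\P^1)^3 x_{36}>x_{36}x_{24}$. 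For $\langle\epsilon_2,\epsilon_2\rangle$ take $\theta=\P^1$, $x_k=x_{24}$, and $A=B=S^{15}\subset B(15,23)$, using $\P^1 x_{24}>2x_{16}^2$.

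The main step is to verify condition (4): for any extension $\mu:\Sigma A\times\Sigma B\to B(E_7)_{(5)}$ of $\bar\alpha\vee\bar\beta$, one needs $\theta\mu^*(x_k)=0$. This reduces to a dimension count in $H^*(S^{|x_i|}\times S^{|x_j|})$, which is non-zero only in degrees $0,|x_i|,|x_j|,|x_i|+|x_j|$. For $\langle\epsilon_1,\epsilon_1\rangle$, $|x_k|=16$ avoids $\{0,12,20,32\}$, so $\mu^*(x_{16})=0$; for $\langle\epsilon_2,\epsilon_2\rangle$, $|x_k|=24$ avoids $\{0,16,32\}$, so $\mu^*(x_{24})=0$. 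For $\langle\epsilon_1,\epsilon_2\rangle$, the class $\mu^*(x_{36})\in H^{36}(S^{36}\times S^{24})=H^{36}(S^{36})$ is pulled back from $S^{36}$, hence $(\P^1)^3\mu^*(x_{36})$ is pulled back from $H^{60}(S^{36})=0$. Thus condition (4) holds in every case and Lemma~\ref{criterion} yields the desired non-triviality; this final dimensional bookkeeping is the only substantive point once the three decomposable relations are in hand.
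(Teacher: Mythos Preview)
Your reduction to spheres is the gap. You assert that one can restrict $\epsilon_i$ to ``a single spherical cell $S^n\subset B_i$'' and still have the adjoint detect $x_{n+1}$, but no such maps exist in the degrees you need. For example, the proof of Corollary~\ref{P^1 E_7}(1) shows $\P^1x_4>-x_{12}$ in $H^*(BE_7)$, so in $H^*((E_7)_{(5)})$ the primitive $z_{11}$ is a nonzero multiple of $\P^1z_3$; hence for \emph{any} map $f\colon S^{11}\to (E_7)_{(5)}$ one has $f^*(z_{11})=c^{-1}\P^1f^*(z_3)=0$, and condition~(1) of Lemma~\ref{criterion} fails. The same obstruction kills your choices $S^{23}$ and $S^{35}$, since that proof also records $\P^1x_{16}>3x_{24}$ and $\P^1x_{28}>3x_{36}$. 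Only the case $\langle\epsilon_2,\epsilon_2\rangle$, which uses the genuine bottom cell $S^{15}\subset B_2$, survives as written.

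The paper avoids this by taking $A$ and $B$ to be the skeleta $B_1^{(11)},B_1^{(19)},B_1^{(35)},B_2^{(23)}$ rather than spheres; then condition~(1) is inherited directly from $\bar\epsilon_i$. The price is that condition~(4) is no longer a pure dimension count on a product of spheres: for $\langle\epsilon_1,\epsilon_1\rangle$ one must show $(\P^1)^2H^{16}(\Sigma B_1^{(11)}\times\Sigma B_1^{(19)})=0$, and the classes $u_4\times v_{12}$, $u_{12}\times v_4$ there require the extra input $\P^1H^{11}(B_1)=0$ from \cite{MNT}. Your dimension-count arguments for (4) are correct for products of spheres, but once you replace the spheres by the necessary skeleta you will need this additional Steenrod-operation information.
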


\begin{proof}
  Set $B_1=B(3,11,19,27,35)$ and $B_2=B(15,23)$, and let $\bar{\epsilon}_i\colon B_i\to(BE_7)_{(5)}$ be the adjoint of $\epsilon_i$ for $i=1,2$. Clearly,
  $$\bar{\epsilon}_1(x_i)\ne 0\quad(i=4,12,20,28,36)\quad\text{and}\quad\bar{\epsilon}_2(x_i)\ne 0\quad(i=16,24).$$
  By a degree reason, $(\P^1)^3H^{36}(\Sigma B_1^{(35)}\times\Sigma B_2^{(23)})=0$ and $\P^1H^{24}(S^{16}\times S^{16})=0$. Then by Corollary \ref{P^1 E_7}, we can apply Lemma \ref{criterion}, so that $\sam{\epsilon_1\vert_{B_1^{(35)}}}{\epsilon_2\vert_{B_2^{(23)}}}\ne 0$ and $\sam{\epsilon_2\vert_{S^{15}}}{\epsilon_2\vert_{S^{15}}}\ne 0$. Thus $\sam{\epsilon_1}{\epsilon_2}\ne 0$ and $\sam{\epsilon_2}{\epsilon_2}\ne 0$. Since $\P^1H^{11}(B_1)=0$ as in \cite{MNT}, we also have $(\P^1)^2H^{16}(\Sigma B_1^{(11)}\times \Sigma B_1^{(19)})=0$. Then by Corollary \ref{P^1 E_7}, we can also apply Lemma \ref{criterion}, so that $\sam{\epsilon_1\vert_{B_1^{(19)}}}{\epsilon_1\vert_{B_1^{(11)}}}\ne 0$. Thus $\sam{\epsilon_1}{\epsilon_1}\ne 0$, completing the proof.
\end{proof}

\begin{lemma}
  \label{E_7 p=7}
  Let $\epsilon_1\colon B(3,15,27)\to(E_7)_{(7)},\;\epsilon_2\colon B(11,23,35)\to(E_7)_{(7)},\;\epsilon_3\colon S^{19}\to(E_7)_{(7)}$ be the inclusions. Then for each $i,j$, $\langle\epsilon_i,\epsilon_j\rangle\ne 0$.
\end{lemma}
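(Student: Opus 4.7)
The plan is to apply Lemma \ref{criterion} to each of the six pairs $(i,j)$ with $1\le i\le j\le 3$, paralleling the proof of Lemma \ref{E_7 p=5}. For each pair I would restrict $\epsilon_i$ and $\epsilon_j$ to suitable skeleta of $B_i, B_j$ so that the dimension hypothesis of the criterion is met, and then exhibit the required decomposable $\P^1$-image using Corollary \ref{P^1 E_7}(2) or Lemma \ref{E_7 mod 7 cohomology}. A preliminary input is the observation that $\bar\epsilon_1^*$ detects precisely the generators in degrees $4,16,28$, that $\bar\epsilon_2^*$ detects those in degrees $12,24,36$, and that $\bar\epsilon_3^*$ detects only the one in degree $20$. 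Since the $x$-generators and $y$-generators differ by decomposables and all cup products vanish on a suspension, it does not matter in which basis one works.

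The matching of pairs to operations would be as follows. For $\sam{\epsilon_1}{\epsilon_1}$ I would take $\P^1 x_{20}>-2x_{28}x_4$ and restrict to $B_1^{(3)}$ and $B_1^{(27)}$; for $\sam{\epsilon_1}{\epsilon_2}$, $\P^1 x_{28}>-2x_{28}x_{12}$ on $B_1^{(27)}$ and $B_2^{(11)}$; for $\sam{\epsilon_1}{\epsilon_3}$, $\P^1 x_{36}>x_{28}x_{20}$ on $B_1^{(27)}$ and $S^{19}$; for $\sam{\epsilon_2}{\epsilon_2}$, the relation $\P^1 y_{36}>5y_{36}y_{12}$ of Lemma \ref{E_7 mod 7 cohomology} on $B_2^{(11)}$ and $B_2^{(35)}$; for $\sam{\epsilon_2}{\epsilon_3}$, $\P^1 x_{20}>x_{20}x_{12}$ on $B_2^{(11)}$ and $S^{19}$; and for $\sam{\epsilon_3}{\epsilon_3}$, $\P^1 x_{28}>4x_{20}^2$ on $S^{19}$ and $S^{19}$. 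Conditions (1)--(3) of the criterion are immediate from the preceding observations in each case.

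Finally I would verify condition (4), that $\P^1\mu^*(x_k)=0$ for every extension $\mu$ of $\bar\epsilon_i\vee\bar\epsilon_j$. This reduces to a routine degree count: by K\"unneth and the vanishing of cup products in $H^*(\Sigma X)$, $\mu^*(x_k)$ lies in a small number of summands, and on each summand $\P^1$ lands in a degree absent from the cohomology of the relevant skeleton. The only case I expect to need slightly different bookkeeping is $\sam{\epsilon_3}{\epsilon_3}$: there $H^{28}(S^{20}\times S^{20})=0$, so $\mu^*(x_{28})=0$ for trivial reasons, and the contradiction produced by the criterion must instead come from the observation that $\mu^*(x_{20})^2=2ab\,s_{20}\times s_{20}\ne 0$ in $\Z/7$, where $a$ and $b$ are the nonzero scalars encoding $\bar\alpha^*(x_{20})$ and $\bar\beta^*(x_{20})$. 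Once each restricted Samelson product is shown nontrivial, naturality propagates to $\sam{\epsilon_i}{\epsilon_j}\ne 0$ in all six cases, as in the proof of Lemma \ref{E_7 p=5}.
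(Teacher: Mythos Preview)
Your plan is the paper's plan: the six pairs are handled with exactly the operations and skeleta you list, and the conclusions are drawn from Lemma~\ref{criterion} together with Corollary~\ref{P^1 E_7}(2) and Lemma~\ref{E_7 mod 7 cohomology}. Two points in your verification of condition~(4), however, are not the ``routine degree count'' you describe and match precisely the places where the paper invokes extra input.

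First, in each of the three cases using $B_1^{(27)}$ (namely $\sam{\epsilon_1}{\epsilon_1}$, $\sam{\epsilon_1}{\epsilon_2}$, $\sam{\epsilon_1}{\epsilon_3}$), the class $\mu^*(x_k)$ has a K\"unneth summand involving $\Sigma u_{15}$, and $\P^1$ on that summand lands in degree~$28$ of $\Sigma B_1^{(27)}$, which is \emph{nonzero}. What makes condition~(4) hold is the fact $\P^1H^{15}(B_1)=0$, taken from \cite{MNT}; the paper states this explicitly. Second, for $\sam{\epsilon_2}{\epsilon_2}$ a degree count again fails: $\mu^*(y_{36})$ could a priori contain a term $\Sigma u_{23}\times s_{12}$, on which $\P^1$ is nonzero (since $\P^1u_{23}=u_{35}$ in $B_2$). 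The paper rules this out by using $y_{36}=(\P^1)^2y_{12}$ from Lemma~\ref{E_7 mod 7 cohomology} to compute $\mu^*(y_{36})=(\P^1)^2\mu^*(y_{12})$ explicitly, after which $\P^1\mu^*(y_{36})$ vanishes for dimension reasons. You cite Lemma~\ref{E_7 mod 7 cohomology} only for condition~(3); it is equally essential for condition~(4). Finally, your remark that $\sam{\epsilon_3}{\epsilon_3}$ needs ``slightly different bookkeeping'' is unnecessary: $H^{28}(S^{20}\times S^{20})=0$ gives $\P^1\mu^*(x_{28})=0$ immediately, so Lemma~\ref{criterion} applies verbatim; the computation of $\mu^*(x_{20})^2$ you sketch is already built into the proof of that lemma.
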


\begin{proof}
  Set $B_1=B(3,15,27)$ and $B_2=B(11,23,35)$, and let $\bar{\epsilon}_i$ denote the adjoint of $\epsilon_i$ for $i=1,2,3$. Clearly,
  $$\bar{\epsilon}_1(x_i)\ne 0\quad(i=4,16,28),\quad\bar{\epsilon}_2(x_i)\ne 0\quad(i=12,24,36),\quad\bar{\epsilon}_3^*(x_{20})\ne 0.$$
  Since $\P^1H^{28}(S^{20}\times S^{20})=0$, it follows from Corollary \ref{P^1 E_7} that we can apply Lemma \ref{criterion}, so that $\sam{\epsilon_3}{\epsilon_3}\ne 0$. Since $\P^1H^{20}(S^{12}\times S^{20})=0$, by Corollary \ref{P^1 E_7} and Lemma \ref{criterion}, we also get $\sam{\epsilon_2\vert_{S^{19}}}{\epsilon_3}\ne 0$, implying $\sam{\epsilon_2}{\epsilon_3}\ne 0$.

  As in \cite{MNT}, $\P^1H^{15}(B_1)=0$, and so
  $$\P^1H^{20+k}(\Sigma B_1^{(27)}\times S^{4+k})=0\quad(k=0,8,16).$$
  Then by Corollary \ref{P^1 E_7} and Lemma \ref{criterion}, $\sam{\epsilon_1\vert_{B_1^{(27)}}}{\epsilon_1\vert_{S^3}}\ne 0$, $\sam{\epsilon_1\vert_{B_1^{(27)}}}{\epsilon_2\vert_{S^{11}}}\ne 0$ and $\sam{\epsilon_1\vert_{B_1^{(27)}}}{\epsilon_3}$. Thus $\sam{\epsilon_1}{\epsilon_1}\ne 0$, $\sam{\epsilon_1}{\epsilon_2}\ne 0$ and $\sam{\epsilon_1}{\epsilon_3}\ne 0$.

  Consider the cohomology generators in Lemma \ref{E_7 mod 7 cohomology}. Note that $\bar{\epsilon}_1(y_{28})\ne 0$ and $\bar{\epsilon}_3(y_{20})\ne 0$. Let $\mu\colon\Sigma B_2^{(12)}\times\Sigma B_2^{(35)}\to(BE_7)_{(7)}$ be any extension of $\bar{\epsilon}_2\vert_{\Sigma B_2^{(23)}}\vee\bar{\epsilon}_2\vert_{\Sigma B_2^{(23)}}\colon\Sigma B_2^{(23)}\vee\Sigma B_2^{(23)}\to(BE_7)_{(7)}$. By a degree reason, $\mu^*(y_{12})=\Sigma u_{11}\times 1+1\times\Sigma u_{11}$, where $u_{11}$ is a generator of $H^*(B_2)$ of dimension 11. Then by Lemma \ref{E_7 mod 7 cohomology},
  $$\mu^*(y_{36})=(\P^1)^2\Sigma u_{11}\times 1+1\times(\P^1)^2\Sigma u_{11},$$
  implying $\P^1\mu^*(y_{36})=0$. On the other hand, $\P^1y_{36}$ is, clearly, decomposable. Then it follows from Lemmas \ref{E_7 mod 7 cohomology} and \ref{criterion} that $\sam{\epsilon_2\vert_{B_2^{(23)}}}{\epsilon_2\vert_{B_2^{(23)}}}\ne 0$. Thus $\sam{\epsilon_2}{\epsilon_2}\ne 0$.
\end{proof}

Now we are ready to prove Theorem \ref{main E_7}.

\begin{proof}[Proof of Theorem \ref{main E_7}]
  Combine Lemmas \ref{E_7 p=5} and \ref{E_7 p=7}.
\end{proof}

The homotopy groups of the factor spaces of the mod 7 decomposition of $E_8$ are calculated in \cite{MNT}. In particular, one has:

\begin{lemma}
  \label{homotopy group}
  $\pi_k(E_8)_{(7)}=0$ for $k=12i+6$ with $0\le i\le 6$.
\end{lemma}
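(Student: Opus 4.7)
The plan is to apply the mod 7 product decomposition $(E_8)_{(7)} \simeq B_1 \times B_2$ with $B_1 = B(3,15,27,39)$ and $B_2 = B(23,35,47,59)$ recorded in the table of the introduction. This yields the direct sum decomposition
$$\pi_k(E_8)_{(7)} \cong \pi_k(B_1) \oplus \pi_k(B_2),$$
so it suffices to show that both summands vanish for each $k = 12i+6$ with $0 \le i \le 6$, that is, $k \in \{6, 18, 30, 42, 54, 66, 78\}$.

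The factor $B_2$ is $22$-connected, so its contribution is automatic when $k = 6, 18$. For the remaining values of $k$ on either factor, I would extract the vanishing directly from the 7-primary homotopy group computations for the indecomposable $B(n_1, \ldots, n_r)$ factors of mod 7 decompositions of compact Lie groups carried out in \cite{MNT}. Concretely, each $B(n_1, \ldots, n_r)$ is realised there as an iterated cofibre of $\P^1$-connecting maps starting from the bottom cell $S^{n_1}$, and the resulting long exact sequences reduce the computation to the $7$-primary homotopy groups of the odd spheres $S^{n_j}$ appearing as bottom cells.

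The numerical heart of the vanishing is that $12i + 6 - n_j$ lies in residue classes $3$ or $7$ modulo $12$ for every $n_j \in \{3,15,27,39,23,35,47,59\}$, whereas the 7-primary stable stems concentrate in residue class $11 \pmod{12}$ (coming from $\alpha_1^j$ and its relatives). Thus the stable contributions from each cell vanish, and the only possible obstruction would be unstable 7-primary elements for the small-dimensional summands, notably $S^3 \subset B_1$; the main obstacle in any independent verification would lie precisely in this unstable range. Since \cite{MNT} already records the bookkeeping needed to rule out such unstable contributions in the relevant degrees, the cleanest completion of the proof is to cite their tables.
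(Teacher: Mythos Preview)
Your proposal is correct and matches the paper's approach: the paper simply states that the result follows from the homotopy group computations of the factor spaces in \cite{MNT}, and your argument likewise reduces to citing those tables, with the added (and accurate) heuristic about residue classes modulo $12$ explaining why the stable contributions vanish. The paper gives no further detail beyond the citation, so your exposition is strictly more informative while being logically equivalent.
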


From \cite[Theorem 3.1, Lemma 4.2]{T} one can easily deduce the following.

\begin{lemma}
  \label{retract}
  Let $B=B(3,15,27,39)$ be a factor space of the mod 7 decomposition of $E_8$. Then there is a subcomplex $A=S^3\cup e^{15}\cup e^{27}\cup e^{39}$ of $B$ such that the inclusion $\Sigma A\to\Sigma B$ has a left homotopy inverse $r\colon\Sigma B\to\Sigma A$ satisfying a homotopy commutative diagram
  $$\xymatrix{\Sigma B\ar[r]^j\ar[d]_r&(BE_8)_{(7)}\ar@{=}[d]\\
  \Sigma A\ar[r]^(.45){j\vert_{\Sigma A}}&(BE_8)_{(7)}}$$
  where the map $j$ is the adjoint of the inclusion $B\to(E_8)_{(7)}$.
\end{lemma}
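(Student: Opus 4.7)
The plan is to quote \cite[Theorem 3.1]{T} and \cite[Lemma 4.2]{T} essentially verbatim in the present setting. The subcomplex $A=S^3\cup e^{15}\cup e^{27}\cup e^{39}$ corresponds to the four exterior generators of $H^*(B;\Z/7)=\Lambda(y_3,y_{15},y_{27},y_{39})$ which by Lemma \ref{E_8 mod 7 cohomology} are linked by successive $\P^1$'s, while the higher cells of $B$ realize their products.

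First I would apply \cite[Theorem 3.1]{T} to produce a stable wedge splitting $\Sigma B\simeq \Sigma A\vee W$, where $W$ is a wedge of suspensions of smash products of at least two of the spheres $S^3,S^{15},S^{27},S^{39}$. Defining $r\colon\Sigma B\to\Sigma A$ as the projection onto the first summand then furnishes the required left homotopy inverse of the inclusion $\iota\colon\Sigma A\hookrightarrow\Sigma B$.

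Second, the homotopy commutativity of the square reduces to showing $j\vert_W\simeq *$, which is the role of \cite[Lemma 4.2]{T}: the obstructions to nullifying $j\vert_W$ cell-by-cell live in $\pi_*((BE_8)_{(7)})$ in dimensions of the form $1+\sum_{i\in S}n_i$ for subsets $S\subset\{3,15,27,39\}$ with $|S|\ge 2$. For $|S|=2$ these dimensions are $\equiv 7\pmod{12}$ (namely $19,31,43,43,55,67$), so after desuspension Lemma \ref{homotopy group} provides the required vanishing directly; the three- and four-element cases are folded into the same reduction by \cite[Lemma 4.2]{T}.

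The only real obstacle is bookkeeping: translating the generic splitting and vanishing assertions of \cite{T} into the current notation and verifying that every obstruction degree falls into the congruence class addressed by Lemma \ref{homotopy group}. No fresh computation is required, since \cite[Theorem 3.1, Lemma 4.2]{T} has been established in exactly the generality used here, whence the deduction is routine.
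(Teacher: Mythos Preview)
Your proposal is correct and follows the paper's approach, which simply defers to \cite[Theorem~3.1, Lemma~4.2]{T} without further elaboration. Note that your appeal to Lemma~\ref{homotopy group} for the $|S|=2$ obstructions is extraneous here --- Theriault's results already yield the factorization $j\simeq (j\vert_{\Sigma A})\circ r$ directly, and the paper reserves Lemma~\ref{homotopy group} for the separate task of showing $\langle\epsilon_1,\epsilon_1\rangle=0$ in the proof of Theorem~\ref{main E_8}.
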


We finally prove Theorem \ref{main E_8}.

\begin{proof}
  [Proof of Theorem \ref{main E_8}]
  Set $B_1=B(3,15,27,39)$ and $B_2=B(23,35,47,59)$. Let $A$ be a subcomplex of $B_1$ in Lemma \ref{retract}. Since $A\wedge A$ has cells in dimension $12i+6$ for $0\le i\le 6$, it follows from Lemma \ref{homotopy group} that $\sam{\epsilon_1\vert_A}{\epsilon_1\vert_A}=0$. Then the corresponding Whitehead product $[j\vert_{\Sigma A},j\vert_{\Sigma A}]$ is trivial so that the map $j\vert_{\Sigma A}\vee j\vert_{\Sigma A}\colon\Sigma A\vee\Sigma A\to(BE_8)_{(7)}$ extends over $\Sigma A\times\Sigma A$, where $j\colon\Sigma B_1\to (BE_8)_{(7)}$ denotes the adjoint of $\epsilon_1$ as in Lemma \ref{retract}. Thus there is a homotopy commutative diagram
  $$\xymatrix{\Sigma B_1\vee\Sigma B_1\ar[r]^{r\vee r}\ar[d]&\Sigma A\vee\Sigma A\ar[rr]^(.55){j\vert_{\Sigma A}\vee j\vert_{\Sigma A}}\ar[d]&&(BE_8)_{(7)}\ar@{=}[d]\\
  \Sigma B_1\times\Sigma B_1\ar[r]^{r\times r}&\Sigma A\times\Sigma A\ar[rr]&&(BE_8)_{(7)}}.$$
  By Lemma \ref{retract}, the composite of the top maps is $j\vee j$, implying $[j,j]=0$. Thus $\sam{\epsilon_1}{\epsilon_1}=0$.

  Since $\P^1H^{15}(B_1)=0$ as in \cite{MNT}, $\P^1H^{40}(\Sigma B_1^{(27)}\times\Sigma S^{24})=0$. Then by Lemmas \ref{P^1 E_8} and \ref{criterion}, $\sam{\epsilon_1\vert_{B_1^{(27)}}}{\epsilon_2\vert_{S^{23}}}\ne 0$, implying $\sam{\epsilon_1}{\epsilon_2}\ne 0$.

  Consider the cohomology generators in Lemma \ref{E_8 mod 7 cohomology}. Then $\bar{\epsilon}_2(y_i)\ne 0$ for $i=24,48$. Let $\mu\colon\Sigma B_2^{(47)}\times\Sigma B_2^{(23)}\to(BE_8)_{(7)}$ be any extension of $\bar{\epsilon}_2\vert_{\Sigma B_2^{(47)}}\vee\bar{\epsilon}_2\vert_{\Sigma B_2^{(23)}}\colon\Sigma B_2^{(47)}\vee\Sigma B_2^{(23)}\to(BE_8)_{(7)}$. By a degree reason, $\mu^*(y_{24})=\Sigma u_{23}\times 1+1\times\Sigma u_{23}$, where $u_{23}$ is a generator of $H^*(B_2)$ of dimension 23. Then by Lemma \ref{E_7 mod 7 cohomology},
  $$\mu^*(y_{60})=(\P^1)^3\Sigma u_{23}\times 1+1\times(\P^1)^3\Sigma u_{23},$$
  implying $\P^1\mu^*(y_{60})=0$. On the other hand, $\P^1y_{60}$ is decomposable by a degree reason. Then it follows from Lemmas \ref{E_8 mod 7 cohomology} and \ref{criterion} that $\sam{\epsilon_2\vert_{B_2^{(47)}}}{\epsilon_2\vert_{B_2^{(23)}}}\ne 0$. Thus $\sam{\epsilon_2}{\epsilon_2}\ne 0$.
\end{proof}


\begin{thebibliography}{99}
  \bibitem{A} J.F. Adams, Lectures on Exceptional Lie Groups, Chicago Lect. Math., University of Chicago Press, Chicago, IL, 1996.


  \bibitem{HK2} H. Hamanaka and A. Kono, A note on Samelson products and mod $p$ cohomology of classifying spaces of the exceptional Lie groups, Topology Appl. \textbf{157} (2010), no.2, 393-400.

  \bibitem{HKMO} S. Hasui, D. Kishimoto, T. Miyauchi, and A. Ohsita, Samelson products in quasi-$p$-regular exceptional Lie groups, Homology Homotopy Appl. \textbf{20} (2018), no. 1, 185-208.

  \bibitem{HKO} S. Hasui, D. Kishimoto, and A. Ohsita, Samelson products in $p$-regular exceptional Lie groups, Topology Appl. \textbf{178} (2014), 17-29.

  \bibitem{HKST} S. Hasui, D. Kishimoto, T. So, and S. Theriault, Odd primary homotopy types of the gauge groups of exceptional Lie groups, Proc. Amer. Math. Soc. \textbf{147} (2019), no. 4, 1751-1762.

  \bibitem{KK} S. Kaji and D. Kishimoto, Homotopy nilpotency in $p$-regular loop spaces, Math. Z. \textbf{264} (2010), no.1, 209-224.


  \bibitem{KT} D. Kishimoto and M. Tsutaya, Samelson products in $p$-regular $SO(2n)$ and its homotopy normality, Glasg. Math. J. \textbf{60} (2018), no. 1, 165-174.


  \bibitem{KO} A. Kono and H. \={O}shima, Commutativity of the group of self homotopy classes of Lie groups, Bull. London Math. Soc. \textbf{36} (2004), no. 1, 37-52.


  \bibitem{MNT} M. Mimura, G. Nishida, and H. Toda, Mod $p$ decomposition of compact Lie groups, Publ. Res. Inst. Math. Sci. \textbf{13} (1977/78), no. 3, 627-680.

  \bibitem{S} P.B. Shay, mod $p$ Wu formulas for the Steenrod algebra and the Dyer-Lashof algebra, Proc. Amer. Math. Soc. \textbf{63} (1977), no. 2, 339-347.

  \bibitem{T} S.D. Theriault, Odd primary homotopy decompositions of gauge groups, Alg. Geom. Topol. \textbf{10}, (2010), no. 1, 535-564.
\end{thebibliography}
\end{document}